\newcommand{\nn}{\nonumber}
\definecolor{myblue}{rgb}{0,0,0.5}
\definecolor{mygreen}{rgb}{0,0.5,0}
\definecolor{myred}{rgb}{0.5,0,0}
\newtheorem{theorem}{Theorem}[section]
\newtheorem{lemma}{Lemma}[section]
\newtheorem{remark}{Remark}
\newtheorem{property}{Property}[section]
\def\hlinew#1{%
  \noalign{\ifnum0=`}\fi\hrule \@height #1 \futurelet
   \reserved@a\@xhline}
\def \[{\begin{equation}}
\def \]{\end{equation}}
\begin{document}
\begin{CJK*}{GBK}{song}

\begin{center}

{\Large \bf  A rank-two relaxed parallel splitting version of the augmented Lagrangian method with step size in $(0,2)$ for separable convex programming}\\

\bigskip
\medskip

 {\bf Bingsheng He}\footnote{\parbox[t]{16cm}{
 Department of Mathematics,  Nanjing University, China.
  This author was supported by the NSFC Grant 11871029. Email: hebma@nju.edu.cn}}
 \quad
  {\bf Feng Ma}\footnote{\parbox[t]{16.0cm}{
High-Tech Institute of Xi'an, Xi'an, 710025, Shaanxi, China. This author was supported by the NSFC Grant 12171481.  Email: mafengnju@gmail.com}}
   \quad
 {\bf Shengjie Xu}\footnote{\parbox[t]{16cm}{
 Department of Mathematics, Harbin Institute of Technology, Harbin, China, and Department of Mathematics,   Southern  University of Science and Technology, Shenzhen, China. This author was supported by the NSFC Grant 11871264. Email: xsjnsu@163.com
  }}
  \quad
 {\bf Xiaoming Yuan}\footnote{\parbox[t]{16cm}{
 Department of Mathematics, The University of Hong Kong, Hong Kong. This author was supported  by a URC Supplementary Funding for Faculties/Units of Assessment from HKU. Email:  xmyuan@hku.hk
  }}

\bigskip

\today

\end{center}

{\small

\parbox{0.95\hsize}{

\hrule

\medskip

{\bf Abstract.} The augmented Lagrangian method (ALM) is classic for canonical convex programming problems with linear constraints, and it finds many applications in various scientific computing areas. A major advantage of the ALM is that the step for updating the dual variable can be further relaxed with a step size in $(0,2)$, and this advantage can easily lead to numerical acceleration for the ALM. When a separable convex programming problem is discussed and a corresponding splitting version of the classic ALM is considered, convergence may not be guaranteed and thus it is seemingly impossible that a step size in $(0,2)$ can be carried on to the relaxation step for updating the dual variable. We show that for a parallel splitting version of the ALM, a step size in $(0,2)$ can be maintained for further relaxing both the primal and dual variables if the relaxation step is simply corrected by a rank-two matrix. Hence, a rank-two relaxed parallel splitting version of the ALM with a step size in $(0,2)$ is proposed for separable convex programming problems. We validate that the new algorithm can numerically outperform existing algorithms of the same kind significantly by testing some applications.

\medskip

\noindent {\bf Keywords}: convex programming, scientific computing,  augmented Lagrangian method, parallel splitting, step size
 \medskip

  \hrule

  }}

\bigskip


\section{Introduction}  \label{Introd}

We start with the canonical convex programming problem with linear equality constraints:
\begin{equation}\label{problem}
  \min\big\{\theta(x)  \mid  \mathcal{A}x=b,  \;  x\in\mathcal{X} \big\},
\end{equation}
where $\theta:\Re^{n}\to {\Re}$ is a closed, proper and convex but not necessarily smooth function, $\mathcal{X}\subseteq\Re^n$ is a closed convex set, ${\cal {A}}\in\Re^{m\times n}$, and $b\in\Re^m$. A classic algorithm for solving (\ref{problem}) is the augmented Lagrangian method (ALM) which was introduced in \cite{Hes69} and \cite{Powell69}, individually. Let
$$ \mathcal{L}_{\beta}(x, \lambda):=\theta(x)-\lambda^T(\mathcal{A}x-b) + \frac{\beta}{2}\|\mathcal{A}x-b\|_2^2$$
be the augmented  Lagrangian function of \eqref{problem} with $\lambda\in\Re^m$ the Lagrange multiplier and $\beta>0$ the penalty parameter. Then, with given $\lambda^k\in\Re^m$, the ALM generates a new iterate $(x^{k+1},\lambda^{k+1})$ via the scheme
\begin{subequations} \label{ALM}
\begin{numcases}{\hbox{(ALM )}}
\label{ALM-x} x^{k+1} \;\in\; \arg\min\big\{ \mathcal{L}_{\beta}(x, \lambda^k)   \mid  x\in\mathcal{X}\big\},\\[0.1cm]
\label{ALM-y} \lambda^{k+1} \;=\; \lambda^k-\beta(\mathcal{A}x^{k+1}-b).
\end{numcases}
\end{subequations}
Hereafter, we also call $x$ and $\lambda$ the primal and dual variables, respectively. Since 1970s, the ALM has constantly found many applications in various scientific computing areas such as optimal control, image processing, optimization, and machine learning. We refer to, e.g., \cite{ABMS-1,ABMS-2,Bert1996,Birgin2014,CP-Acta,Fortin1983,Glowinski89,Ito2008}, for some monographs and papers about the ALM.

As analyzed in \cite{Rock76B}, the ALM \eqref{ALM} can be interpreted as an application of the proximal point algorithm (PPA) \cite{Mar70} to the dual problem of \eqref{problem}. This PPA perspective has immediately inspired the following relaxation version of the ALM \eqref{ALM} in \cite{Gol1979}:
 \begin{subequations} \label{Relax}
\begin{numcases}{\hbox{(Relaxed ALM )}}
\label{RALM-x}x^{k+1} \;\in\;  \arg\min\big\{ \mathcal{L}_{\beta}(x, \lambda^k)   \;|\;  x\in\mathcal{X}\big\}, \\[0.1cm]
\label{RALM-y}\;\;\tilde{\lambda}^k \;\;\,=\;  \lambda^k-\beta(\mathcal{A}x^{k+1}-b), \\[0.1cm]
\label{RALM-R}\lambda^{k+1} \;=\;  \lambda^k-\alpha(\lambda^k-\tilde{\lambda}^k),
\end{numcases}
\end{subequations}
in which the update for the dual variable $\lambda$ is further relaxed with the step size $\alpha\in(0,2)$. It is known that the further relaxation step (\ref{RALM-R}) can easily accelerate the convergence of the classic ALM \eqref{ALM}, as analyzed in \cite{Eck92,Fortin1983,Glowinski89,TaoYuan2018} and shown empirically in \cite{Bert1996,Eck94,FHLY, Glowinski89,Wen}. The validation of further relaxing the update for the dual variable with a large step size in $(0,2)$ is indeed a major advantage of the ALM (\ref{ALM}). When the canonical model (\ref{problem}) is more specific, usually the ALM (\ref{ALM}) should also be adapted to more specific and practical forms for being implemented. If the model (\ref{problem}) has separable structure and thus the ALM (\ref{ALM}) is decomposed conformally, more precisely, if the $x$-subproblem \eqref{ALM-x} is decomposed into multiple ones with respect to all $x_i$'s, can the relaxation step \eqref{RALM-R} with $\alpha\in (0,2)$ be still valid? We mainly focus on answering this question in this paper.

Let us nail down the question with details. Consider the following multiple-block separable convex programming problem whose objective function is the sum of multiple subfunctions without coupled variables:
\begin{equation}\label{A-Problem-M}
\min\Big\{\sum_{i=1}^p \theta_i(x_i) \mid \sum_{i=1}^p A_i x_i =b; \; x_i\in {\cal X}_i, \; i=1,\ldots, p\Big\},
\end{equation}
where $\theta_i: {\Re}^{n_i}\to {\Re} \;(i=1,\ldots, p)$ are all closed, proper and convex but not necessarily smooth functions, ${\cal X}_i\subseteq \Re^{n_i} \;(i=1,\ldots, p)$ are closed convex sets,
$A_i\in \Re^{m\times n_i} \;(i=1,\ldots, p)$, $b\in \Re^m$, and $p\ge 2$. We refer to, e.g.,  \cite{boyd2010distributed,chandrasekaran2012latent,kiwiel1999proximal,McLachlan,PGWM,tao2011recovering}, for various applications that can be formed as the separable convex programming model \eqref{A-Problem-M}. Throughout our discussion, the solution set of \eqref{A-Problem-M} is assumed to be nonempty, and each $A_i$ in \eqref{A-Problem-M} is assumed to be full column-rank. The separable model \eqref{A-Problem-M} can be regarded as a special case of the canonical model \eqref{problem} with
\begin{equation}\label{notations}
x=\left(\!\!
\begin{array}{c}
x_1 \\[-0.15cm]
\vdots \\[-0.2cm]
x_p \\
\end{array}\!\!
\right), \;\; \theta(x)=\sum_{i=1}^p \theta_i(x_i), \;\;  \mathcal{A}=
\left[\!\!
\begin{array}{ccc}
A_1 & \cdots & A_p \\
\end{array}\!\!
\right], \;\; \mathcal{X}=\mathcal{X}_1\times\cdots\times\mathcal{X}_p ,\;\; \hbox{and} \;\; n=\sum_{i=1}^pn_i.
\end{equation}
If the separable structure is overlooked and the ALM \eqref{ALM} is applied directly to \eqref{A-Problem-M}, the resulting scheme reads as
\begin{subequations} \label{A-ALM}
\begin{numcases}{}
\label{A-ALM-x} (x_1^{k+1}, \ldots, x_p^{k+1}) \;\in \; \arg\min\bigl\{{\cal L}_{\beta}(x_1, \ldots, x_p, \lambda^k)   \mid  x_i\in \mathcal{X}_i,\; i=1,\ldots,p\bigr\},\\[0.1cm]
\label{A-ALM-y} \qquad \qquad \quad  \lambda^{k+1} \;=\; \lambda^k- \beta (\textstyle\sum_{i=1}^p A_ix_i^{k+1} -b),
\end{numcases}
\end{subequations}
where $ \mathcal{L}_{\beta}(x_1,\ldots, x_p, \lambda)$ is the augmented Lagrangian function of (\ref{A-Problem-M}) defined as
\begin{equation}\label{LagranM}
  \mathcal{L}_{\beta}(x_1,\ldots, x_p, \lambda):=\sum_{i=1}^p\theta_i(x_i)-\lambda^T(\sum_{i=1}^pA_ix_i-b)+\frac{\beta}{2}\|\sum_{i=1}^pA_ix_i-b\|_2^2
\end{equation}
with the Lagrange multiplier $\lambda \in\Re^m$ and the penalty parameter $\beta>0$. To alleviate the subproblem \eqref{A-ALM-x} and to effectively exploit the separable structure of the model \eqref{A-Problem-M}, it is natural to consider decomposing the subproblem \eqref{A-ALM-x} as $p$ subproblems with respect to $x_i$'s by either the Gauss-Seidel or the Jacobian manner. More specifically, decomposing the subproblem \eqref{A-ALM-x} by the Gauss-Seidel manner yields the scheme
\begin{equation}\label{DADMM}
\left\{
  \begin{array}{lcl}
        x_1^{k+1}  & \in & \arg\min\big\{ \mathcal{L}_{\beta}(x_1,x_2^k,\ldots, x_p^k, \lambda^k)  \mid  x_1\in\mathcal{X}_1\big\},\\
                           &    & \qquad \vdots  \\
        x_i^{k+1} & \in & \arg\min\big\{ \mathcal{L}_{\beta}(x_1^{k+1},\ldots,x_{i-1}^{k+1},x_i,x_{i+1}^k,\ldots, x_p^k, \lambda^k)  \mid x_i\in\mathcal{X}_i\big\},\\
           &  &\qquad \vdots \\
        x_p^{k+1} & \in &\arg\min\big\{ \mathcal{L}_{\beta}(x_1^{k+1},\ldots, x_{p-1}^{k+1},x_p, \lambda^k)  \mid  x_p\in\mathcal{X}_p\big\},\\[0.20cm]
 \lambda^{k+1} & = & \lambda^k-\beta (\sum_{i=1}^p A_ix_i^{k+1}-b);
 \end{array}
\right.
\end{equation}
and the resulting scheme by decomposing the subproblem \eqref{A-ALM-x} by the Jacobian manner is
\begin{equation}\label{PSALM-P}
  \left\{
  \begin{array}{lcl}
        x_1^{k+1}  & \in & \arg\min\big\{ \mathcal{L}_{\beta}(x_1,x_2^k,\ldots, x_p^k, \lambda^k) \mid x_1\in\mathcal{X}_1\big\},\\
                           &    & \qquad \vdots  \\
        x_i^{k+1} & \in & \arg\min\big\{ \mathcal{L}_{\beta}(x_1^{k},\ldots,x_{i-1}^k,x_i,x_{i+1}^k,\ldots, x_p^k, \lambda^k)  \mid x_i\in\mathcal{X}_i\big\},\\
           & & \qquad \vdots  \\
        x_p^{k+1}  & \in & \arg\min\big\{ \mathcal{L}_{\beta}(x_1^{k},\ldots, x_{p-1}^{k},x_p, \lambda^k) \mid x_p\in\mathcal{X}_p\big\},\\[0.20cm]
       \lambda^{k+1} & = & \lambda^k-\beta (\sum_{i=1}^p A_ix_i^{k+1}-b).
  \end{array}
\right.
\end{equation}
Despite the well studied convergence of the ALM (\ref{ALM}), or (\ref{A-ALM}), it is not true that the splitting versions (\ref{DADMM}) and (\ref{PSALM-P}) must also be convergent. Indeed, without further assumptions and/or conditions on the functions, coefficient matrices, and the penalty parameter $\beta$, only the case of (\ref{DADMM}) with $p=2$ is convergent, which is well known as the alternating direction method of multipliers (ADMM) studied originally in \cite{GM}. The divergence of (\ref{DADMM}) with $p\ge 3$ and the divergence of (\ref{PSALM-P}) with $p\ge 2$ have been shown in \cite{CHYY} and \cite{HeHouYuanSIOPT}, respectively. Hence, the lack of convergence guarantee imposes the impossibility of carrying on the relaxation step \eqref{RALM-R} to (\ref{DADMM}) and (\ref{PSALM-P}) for the generic setting of the separable convex programming problem (\ref{A-Problem-M}). It is worth noting that the only convergence-guaranteeing case of (\ref{DADMM}) with $p=2$, i.e., the ADMM, the step size for further relaxing the update of the dual variable $\lambda$ can be in $(0, (\sqrt{5}+1)/2)$, as shown in \cite{Fortin1983}. It was asked by Glowinski in \cite{Glow84} if the step size could be enlarged to $(0,2)$. This question is still open for the generic case of (\ref{A-Problem-M}), while an affirmative answer was given in \cite{TY-JOTA} only for the special case of (\ref{A-Problem-M}) with $p=2$ and both subfunctions in the objective are quadratic.

In this paper, we investigate how to relax the parallel splitting version of the ALM (\ref{PSALM-P}), whose convergence is not guaranteed, such that the convergence can be guaranteed while the step size in $(0,2)$ can be maintained for the relaxation step. There is an earlier such effort  \cite{HeHouYuanSIOPT}, in which the output of (\ref{PSALM-P}) is further relaxed by the step
\begin{equation}\label{PSALM-C}
  w^{k+1}:=w^k-\alpha(w^k-w^{k+1}),
\end{equation}
where $w=(x_1;\ldots;x_p;\lambda)$, and
\begin{equation}\label{PSALM-step}
\alpha\in \big(0,2\big(1- \sqrt{p/(p+1)}\big)\big).
\end{equation}
For the relaxation step (\ref{PSALM-C}), both the primal and dual variables are further relaxed, but the step size $\alpha$ shrinks to $0$ when $p$ is large. Hence, the relaxation step (\ref{PSALM-C}) becomes ineffective and numerical acceleration can be hardly expected via the relaxation step (\ref{PSALM-C}) if $p$ is too large. Another relevant work is \cite{HXY2016} (see also \cite{HeTaoYuanIMA} and \cite{Deng2017}), in which there is no further relaxation step but a proximal regularization term is suggested to be added to the objective function of each $x_i$-subproblem. The resulting scheme is
\begin{equation}\label{PJALMP}
  \left\{
  \begin{array}{lcl}
   x_1^{k+1} & \in & \arg\min\big\{ \mathcal{L}_{\beta}(x_1,x_2^k,\ldots, x_p^k, \lambda^k) +\frac{\tau\beta}{2} \|A_1x_1-A_1x_1^k\|^2 \mid x_1\in\mathcal{X}_1\big\},\\
                      &    & \qquad \vdots  \\
   x_i^{k+1} & \in & \arg\min\big\{ \mathcal{L}_{\beta}(x_1^{k},\ldots,x_{i-1}^k,x_i,x_{i+1}^k,\ldots, x_p^k, \lambda^k)+\frac{\tau\beta}{2} \|A_ix_i-A_ix_i^k\|^2  \mid x_i\in\mathcal{X}_i\big\},\\
                      &    & \qquad \vdots  \\
   x_p^{k+1} & \in & \arg\min\big\{ \mathcal{L}_{\beta}(x_1^{k},\ldots, x_{p-1}^{k},x_p, \lambda^k) +\frac{\tau\beta}{2} \|A_px_p-A_px_p^k\|^2 \mid x_p\in\mathcal{X}_p\big\},\\[0.3cm]
   \lambda^{k+1} & = & \lambda^k-\beta (\sum_{i=1}^p A_ix_i^{k+1}-b).
  \end{array}
\right.
\end{equation}
To ensure the convergence of (\ref{PJALMP}), the regularization parameter $\tau$ is required to satisfy the condition
\begin{equation}\label{PJALMC}
 \tau > p-1.
\end{equation}
Recently, it was shown in \cite{Hemyuan2020} that the condition \eqref{PJALMC} can be optimally improved as $\tau>0.75p-1$.
Hence, if $p$ is large, the regularization terms in (\ref{PJALMP}) dominate the objective functions of all $x_i$-subproblems. Accordingly, tiny step sizes are inevitably generated and the convergence will be slowed down. It can be seen that all the existing works \cite{Deng2017,HeHouYuanSIOPT,Hemyuan2020,HeTaoYuanIMA,HXY2016} essentially require additional $p$-dependent conditions to ensure the convergence, whilst these conditions do not favor ensuring the favorable advantage of maintaining a large step size in $(0,2)$ for updating the primal and/or dual variables.

As mentioned, our purpose is to discuss how to maintain a step size in $(0,2)$ for further relaxing the primal and/or dual variables for the parallel splitting version of the ALM (\ref{PSALM-P}), whilst the difficulty of the subproblems in (\ref{PSALM-P}) is also maintained. In a nutshell, our idea is a synergetic combination of \eqref{PSALM-P}-\eqref{PSALM-C} and (\ref{PJALMP}), but their respective shortages in (\ref{PSALM-step}) and (\ref{PJALMC}) are all overcame. More specifically, we keep the proximal regularization for the $x_i$-subproblems as (\ref{PJALMP}) but remove $\tau$ from the coefficients of the proximal terms, and we keep a relaxation step similar as (\ref{PSALM-C}) for further relaxing both the primal and dual variables but with a step size in $(0,2)$, rather than (\ref{PSALM-step}). The new algorithm does not have any additional condition depending on the number of blocks $p$ such as (\ref{PSALM-step}) and (\ref{PJALMC}), while its subproblems are analytically of the same difficulty as those in (\ref{DADMM}), (\ref{PSALM-P}) and \eqref{PJALMP}. This seems to be the first algorithm stemming from the idea of splitting the ALM (\ref{A-ALM}) for the generic setting of the separable convex programming problem (\ref{A-Problem-M}), while the convergence can be theoretically guaranteed and a step size in $(0,2)$ can be maintained for relaxing both the primal and dual variables.


The rest of the paper is organized as follows.
In Section \ref{sec2}, we summarize some preliminaries, introduce some matrices and notations, and prove some elementary assertions for streamlining our analysis. Then, we present the new algorithm in Section \ref{sec3}, and give some remarks. In Section \ref{sec4}, convergence of the new algorithm is analyzed; and convergence rates of the new algorithm in both the ergodic and point-wise senses are derived in Section \ref{Sec-rate}. Some numerical results are reported in Section \ref{sec5}. We briefly discuss some extensions in Section \ref{secE}, and finally some conclusions are drawn in Section \ref{sec6}.


\section{Preliminaries}\label{sec2}
\setcounter{equation}{0}
\setcounter{remark}{0}
\setcounter{proposition}{0}

In this section, we summarize some preliminaries, define some basic matrices and notations for simplifying the presentation of further analysis, and prove some elementary assertions. We first present a fundamental lemma which will be frequently used in our analysis. Its elementary proof can be found in, e.g., \cite{Beck}.
\begin{lemma} \label{CP-TF}
\begin{subequations} \label{CP-TF0}
 Let ${\cal Z} \subseteq \Re^l$ be a closed convex set, $\theta: \Re^l\rightarrow \Re$ and $f : \Re^l\rightarrow \Re$ be convex functions. If $f$ is differentiable on an open set which contains ${\cal Z}$, and the solution set of the minimization problem
 $$\min\{\theta(z) + f(z) \mid z\in {\cal Z}\}$$ is nonempty, then we have
 \begin{equation}\label{CP-TF1}
   z^\ast  \in \arg\min \{  \theta(z) + f(z)  \mid  z\in \mathcal{Z}\}
 \end{equation}
if and only if
 \begin{equation}\label{CP-TF2}
   z^\ast\in \mathcal{Z}, \quad   \theta(z) - \theta(z^\ast) + (z-z^\ast)^T\nabla f(z^\ast) \ge 0, \quad \forall\, z\in \mathcal{Z}.
 \end{equation}
\end{subequations}
\end{lemma}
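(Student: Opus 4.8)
The plan is to prove the two implications separately, exploiting convexity of $\mathcal{Z}$ together with the convexity and differentiability of the two functions. For the ``only if'' direction, suppose $z^\ast$ solves the minimization problem. The key device is to probe optimality along line segments inside $\mathcal{Z}$: for an arbitrary $z\in\mathcal{Z}$ and any $t\in(0,1]$, convexity of $\mathcal{Z}$ guarantees that $z_t:=(1-t)z^\ast+tz\in\mathcal{Z}$, so optimality of $z^\ast$ yields $\theta(z_t)+f(z_t)\ge\theta(z^\ast)+f(z^\ast)$. I would then bound $\theta(z_t)$ from above by convexity, namely $\theta(z_t)\le(1-t)\theta(z^\ast)+t\theta(z)$, substitute this in, cancel $\theta(z^\ast)$, and divide by $t>0$ to obtain $\theta(z)-\theta(z^\ast)+\big(f(z_t)-f(z^\ast)\big)/t\ge 0$. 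Letting $t\to 0^+$ and using that $f$ is differentiable at $z^\ast$, so that the difference quotient converges to the directional derivative $(z-z^\ast)^T\nabla f(z^\ast)$, delivers exactly \eqref{CP-TF2}.

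For the ``if'' direction, assume \eqref{CP-TF2} holds; I would show that $z^\ast$ is optimal by combining the variational inequality with the gradient inequality for the smooth convex part. Since $f$ is convex and differentiable, for every $z\in\mathcal{Z}$ one has $f(z)-f(z^\ast)\ge (z-z^\ast)^T\nabla f(z^\ast)$. Adding this to the inequality $\theta(z)-\theta(z^\ast)\ge -(z-z^\ast)^T\nabla f(z^\ast)$ supplied by \eqref{CP-TF2}, the gradient terms cancel and I obtain $\big(\theta(z)+f(z)\big)-\big(\theta(z^\ast)+f(z^\ast)\big)\ge 0$ for all $z\in\mathcal{Z}$, which is precisely optimality of $z^\ast$.

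I do not expect a genuine obstacle here, as the result is the textbook first-order optimality condition for a composite convex problem; the nonemptiness hypothesis only ensures that $z^\ast$ exists, so the statement is not vacuous. The one point deserving care is the passage to the limit $t\to 0^+$ in the forward direction: it relies on $f$ being differentiable on an open set containing $\mathcal{Z}$, which guarantees that each $z_t$ lies in the domain of differentiability and that the difference quotient converges to $\nabla f(z^\ast)^T(z-z^\ast)$. I would therefore invoke that hypothesis explicitly rather than merely convexity of $f$. Note the asymmetry the proof exploits: smoothness is used only for $f$, while $\theta$ is handled purely through its convexity, which is exactly why the statement keeps $\theta$ un-differentiated inside \eqref{CP-TF2}.
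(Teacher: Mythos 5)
Your proof is correct, and both directions are sound: the forward direction properly uses that $f$ is differentiable on an open set containing $\mathcal{Z}$ to pass the difference quotient along $z^\ast+t(z-z^\ast)$ to the limit $t\to 0^+$, and the converse correctly cancels the gradient terms by adding the gradient inequality $f(z)\ge f(z^\ast)+(z-z^\ast)^T\nabla f(z^\ast)$ to \eqref{CP-TF2}. Note, however, that the paper itself does not prove Lemma \ref{CP-TF} at all — it states that the proof is elementary and defers to \cite{Beck} — and your argument is exactly the standard textbook proof that this citation points to, so you have simply supplied the argument the paper chose to omit.
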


\subsection{Variational inequality characterization}
Similar as our previous works such as \cite{HeHouYuanSIOPT,he2012alternating}, our analysis will be conducted in the variational inequality (VI) context.
Let us first characterize the corresponding VI of the optimality condition for the model \eqref{A-Problem-M}. Note that the Lagrangian function of \eqref{A-Problem-M} is
\begin{equation}\label{lagra}
  L(x_1,\ldots, x_p, \lambda)=\sum_{i=1}^p\theta_i(x_i)-\lambda^T(\sum_{i=1}^pA_ix_i-b),
\end{equation}
which is defined on the set $\Omega:=\mathcal{X}_1\times\cdots\times\mathcal{X}_p\times\Re^m$ and  $\lambda\in\Re^m$ is the Lagrange multiplier. We call $(x_1^\ast,\ldots,x_p^\ast,\lambda^\ast)\in\Omega$ a saddle point of \eqref{lagra} if it satisfies the following inequalities:
$$L(x_1^\ast,\ldots,x_p^\ast,\lambda)\leq L(x_1^\ast,\ldots,x_p^\ast,\lambda^\ast)\leq L(x_1,\ldots,x_p,\lambda^\ast), \quad \forall \; (x_1,\ldots,x_p,\lambda)\in\Omega.$$
Moreover, such a saddle point $(x_1^\ast,\ldots,x_p^\ast,\lambda^\ast)\in\Omega$ can be also characterized by
\begin{equation*}
  \left\{
    \begin{array}{ll}
     x_1^\ast \in \arg\min\big\{L(x_1,x_2^\ast,\ldots,x_p^\ast,\lambda^\ast) \mid x_1\in \mathcal{X}_1\big\}, \\[-0.1cm]
         \qquad \qquad   \vdots \\[-0.1cm]
     x_i^\ast \in \arg\min\big\{L(x_1^\ast,\ldots,x_{i-1}^\ast,x_i,x_{i+1}^\ast,\ldots,x_p^\ast,\lambda^\ast) \mid x_i\in \mathcal{X}_i\big\}, \\[-0.1cm]
     \qquad \qquad   \vdots \\[-0.1cm]
     x_p^\ast \in \arg\min\big\{L(x_1^\ast,\ldots,x_{p-1}^\ast, x_p,\lambda^\ast) \mid x_p\in \mathcal{X}_p\big\}, \\[0.2cm]
     \lambda^\ast \in \arg\max\big\{L(x_1^\ast,\ldots,x_p^\ast,\lambda) \mid \lambda\in \Re^m\big\}.
    \end{array}
  \right.
\end{equation*}
Recall that the subfunctions $\theta_i \,(i=1,\ldots,p)$ are not assumed to be smooth in \eqref{A-Problem-M}. According to Lemma \ref{CP-TF}, $(x_1^\ast,\ldots,x_p^\ast,\lambda^\ast)$ also satisfies the following inequalities:
\begin{equation}\label{VI}
\left\{ \begin{array}{rl}
\theta_1(x_1) - \theta_1(x_1^\ast) + (x_1-x_1^\ast)^T(- A_1^T\lambda^\ast) \ge 0, & \forall\; x_1\in {\cal X}_1, \\[-0.1cm]
  \vdots  \qquad \qquad \qquad \qquad & \\[-0.1cm]
\theta_i(x_i) - \theta_i(x_i^\ast) + (x_i-x_i^\ast)^T(- A_i^T\lambda^\ast) \ge 0, & \forall\; x_i\in {\cal X}_i, \\[-0.1cm]
  \vdots  \qquad \qquad \qquad \qquad & \\[-0.1cm]
\theta_p(x_p) - \theta_p(x_p^\ast) + (x_p-x_p^\ast)^T(- A_p^T\lambda^\ast) \ge 0,  & \forall\; x_p\in {\cal X}_p, \\[0.2cm]
(\lambda-\lambda^\ast)^T(\sum_{i=1}^pA_ix_i^\ast-b)\geq 0,  &  \forall \; \lambda\in \Re^m.
\end{array} \right.
\end{equation}
Furthermore,  all inequalities in \eqref{VI} can be rewritten as the following more compact form:
\begin{subequations}\label{VI1}
\begin{equation}\label{OVI}
  \hbox{VI}(\Omega,F,\theta): \quad  w^*\in \Omega, \quad \theta(x) -\theta(x^*) + (w-w^*)^T F(w^*) \ge 0, \quad  \forall  \; w\in\Omega,
\end{equation}
with
\begin{eqnarray}\label{VI-S}
 \theta=\sum_{i=1}^p\theta_i, & & \Omega=\mathcal{X}_1\times\cdots\times\mathcal{X}_p\times\Re^m,  \nonumber\\
 x= \left(\!\begin{array}{c}
                     x_1\\[-0.1cm]
                     \vdots\\[-0.1cm]
                     x_p
 \end{array} \!\right),  & &
 w= \left(\!\begin{array}{c}
                     x_1\\[-0.1cm]
                     \vdots\\[-0.1cm]
                     x_p\\
                     \lambda
 \end{array}\! \right) \quad \hbox{and} \quad
F(w)=\left(\!\begin{array}{c}
           -\mathcal{A}^T\lambda \\[0.1cm]
           \mathcal{A}x-b \\
            \end{array}\!\right)=\left(\!\begin{array}{c}
           -A_1^T\lambda \\
                     \vdots       \\
           -A_p^T\lambda \\[0.1cm]
           \sum_{i=1}^p A_ix_i-b \\
            \end{array}\!\right).
\end{eqnarray}
\end{subequations}
Note that the operator $F$ defined in \eqref{VI-S} is monotone, because that
\begin{equation}\label{Skew-S}
  (w-\bar{w})^T(F(w)-F(\bar{w}))\equiv0, \quad \forall\; w,\;\bar{w}\in\Re^{n+m}.
\end{equation}
Also, the solution set of \eqref{VI1} is the set of saddle points of the Lagrangian function \eqref{lagra} of the model \eqref{A-Problem-M}.

\subsection{Some matrices}
To simplify the presentation of analysis,  we first define the following four matrices:
\begin{equation}\label{Matrix-wQ}
Q = \left[\!\!\begin{array}{ccccc}
       \beta A_1^TA_1 &                             &              &                              &  A_1^T       \\[0.1cm]
                                  & \beta A_2^TA_2  &              &                              & A_2^T        \\[0.1cm]
                                  &                             &   \ddots &                              &    \vdots       \\[0.1cm]
                                  &                             &              &  \beta A_p^TA_p  &   A_p^T      \\[0.1cm]
                      -  A_1  &   -A_2                  &  \cdots  &  -A_p                    &  \frac{1}{\beta}I_m
 \end{array}\!\!\right], \quad
 P =\left[\!\!\begin{array}{ccccc}
A_1   &           &                &           &    0          \\[0.1cm]
          &  A_2  &                &           &    0          \\[0.1cm]
          &           &   \ddots   &           &  \vdots     \\[0.1cm]
          &           &                &  A_p  &    0          \\[0.1cm]
0        &    0     &   \cdots   &     0    &   I_m
 \end{array}\!\!\right],
\end{equation}
\begin{equation}\label{Matrix-QS}
\mathcal{D}=\left[\!\!\begin{array}{ccccc}
\beta I_m   &                   &                 &                   &    0          \\[0.1cm]
                  &  \beta I_m  &                 &                   &    0          \\[0.1cm]
                  &                   &   \ddots    &                   &  \vdots     \\[0.1cm]
                  &                   &                 &  \beta I_m  &    0          \\[0.1cm]
0                &        0         &   \cdots    &          0       &   \frac{1}{\beta}I_m
 \end{array}\!\!\right]  \quad \hbox{and} \quad
 \mathcal{Q} = \left[\!\!\begin{array}{ccccc}
     \beta I_m  &              &             &             &   I_m                   \\[0.1cm]
             &     \beta I_m   &             &             &   I_m                   \\[0.1cm]
             &              &  \ddots &              &  \vdots                \\[0.1cm]
             &              &             &    \beta I_m   &   I_m                  \\[0.1cm]
  - I_m  &   - I_m   &  \cdots &   -I_m   &   \frac{1}{\beta}I_m
     \end{array}\!\!\right],
\end{equation}
where $I_m$ is the $m\times m$ identity matrix. It is obvious that the matrices $Q$, $P$, $\mathcal{D}$ and $\mathcal{Q}$ are all ${(p+1)\times (p+1)}$ partitioned, and they satisfy the condition
\begin{equation}\label{PQP}
  Q=P^T\mathcal{Q}P \quad \hbox{and} \quad \mathcal{Q}^T+\mathcal{Q}=2\mathcal{D}.
\end{equation}

\subsection{Some properties}
With the matrices $\mathcal{D}$ and $\mathcal{Q}$ defined in \eqref{Matrix-QS},  we further define two matrices $\mathcal{M}$ and $\mathcal{H}$ as
\begin{equation}\label{Matrix-M}
  \mathcal{M} = \mathcal{Q}^{-T}\mathcal{D} \quad \hbox{and} \quad \mathcal{H}=\mathcal{Q}\mathcal{D}^{-1}\mathcal{Q}^T.
\end{equation}
Below we show that the matrices $\mathcal{M}$ and $\mathcal{H}$ in \eqref{Matrix-M} have the following  properties.

\begin{property}
For the matrix $\mathcal{M}$ defined in \eqref{Matrix-M}, it holds that
\end{property}
\begin{equation}\label{Matrix-M-T}
  \mathcal{M} =
     \left[\begin{array}{ccccc}
            I_m &      0  &  \cdots   &   0  & 0   \\
              0     &    \ddots  & \ddots     & \vdots    & \vdots \\
             \vdots   & \ddots  &   \ddots  &   0 &   0\\
              0  &  \cdots    &   0 &     I_m &        0\\
             0  &     0   &  \cdots & 0  &  I_m
                               \end{array}\!\!\right]    - \frac{1}{p+1}
\left[\begin{array}{ccccc}
 I_m          &   I_m         &  \cdots   &  I_m          &  -\frac{1}{\beta}I_m   \\
 I_m          &   I_m         &  \cdots   &  I_m          &  -\frac{1}{\beta}I_m \\
 \vdots       &   \vdots      &  \ddots  &   \vdots      &  \vdots   \\
 I_m          &   I_m         &  \cdots   &  I_m          &  -\frac{1}{\beta}I_m\\
 \beta I_m &   \beta I_m &  \cdots  &  \beta I_m  &  p I_m
                               \end{array}\right].
\end{equation}
\begin{proof}
To begin with, let us define two more $(p+1)\times (p+1)$ matrices $\mathcal{Q}_0$ and $\mathcal{D}_0$:
\begin{equation}\label{Q0}
  \mathcal{Q}_0 =
\left[\begin{array}{cccc}
\beta  &               &            &   1       \\
          &   \ddots  &            &  \vdots \\
          &               &  \beta  &   1       \\
 -1      &  \cdots   &     -1    &   \frac{1}{\beta}
\end{array}\right] =
\left[\begin{array}{cc}
  \beta I_p &  e_p  \\[0.1cm]
 -e_p^T   &   \frac{1}{\beta}
 \end{array}\right]\quad \hbox{and} \quad  \mathcal{D}_0 =
 \left[\begin{array}{cccc}
  \beta   &              &           &   0       \\
            &  \ddots   &           &  \vdots \\
            &               &  \beta &   0       \\
      0    &  \cdots   &     0    &   \frac{1}{\beta}
\end{array}\right] =
\left[\begin{array}{cc}
  \beta I_p &  0  \\[0.1cm]
   0    &   \frac{1}{\beta}
 \end{array}\right],
\end{equation}
where $e_p\in \Re^p$ is a column vector whose elements are all $1$. Then, we have
\begin{eqnarray}\label{QT0}
 \mathcal{Q}_0^T  & = &
\left[\begin{array}{cc}
\beta I_p & - e_p  \\[0.1cm]
 e_p^T    &   \frac{1}{\beta}
\end{array}\right] =
\left[\begin{array}{cc}
  \beta I_p  &  0      \\[0.1cm]
  0     &  \frac{1}{\beta}
\end{array}\right] +
\left[\begin{array}{cc}
e_p    &    0    \\[0.1cm]
0        &    1
\end{array}\right]_{(p+1)\times 2}
\left[\begin{array}{cc}
0           &    -1    \\[0.1cm]
e_p^T   &     0
\end{array}\right]_{2\times(p+1)}  \nonumber \\[0.2cm]
&=&
\left[\begin{array}{cc}
  \beta I_p  &  0      \\[0.1cm]
  0     &  \frac{1}{\beta}
\end{array}\right]\Bigg(\left[\begin{array}{cc}
  I_p  &  0      \\[0.1cm]
  0     &  1
\end{array}\right] +
\left[\begin{array}{cc}
\frac{1}{\sqrt{\beta}}e_p    &    0    \\[0.1cm]
0        &    \sqrt{\beta}
\end{array}\right]
\left[\begin{array}{cc}
0     &    -\frac{1}{\sqrt{\beta}}    \\[0.1cm]
\sqrt{\beta}e_p^T   &     0
\end{array}\right]\Bigg).
\end{eqnarray}
Setting
$$ U =
\left[\begin{array}{cc}
\frac{1}{\sqrt{\beta}}e_p   &  0 \\[0.1cm]
 0     &  \sqrt{\beta}
\end{array}\right]  \quad \hbox{and} \quad V =
\left[\begin{array}{cc}
0     &   \sqrt{\beta}e_p   \\[0.1cm]
-\frac{1}{\sqrt{\beta}}    &    0
\end{array}\right],$$
and using the Sherman-Morrison-Woodbury formula, we obtain that
\begin{eqnarray}\label{M0-T}
\mathcal{M}_0  & := & \mathcal{Q}_0^{-T}\mathcal{D}_0 \nonumber \\
& = & [\mathcal{D}_0(I_{p+1} + UV^T)]^{-1}\mathcal{D}_0      \nonumber \\
& = & [I_{p+1} + UV^T)]^{-1} \nonumber \\
& = & I_{p+1} - U(I_2 + V^T U)^{-1} V^T     \nonumber \\
& = & I_{p+1}-
\left[\begin{array}{cc}
\frac{1}{\sqrt{\beta}}e_p  &   0 \\[0.1cm]
 0    &   \sqrt{\beta}
\end{array}\right]
\left[
\begin{array}{cc}
1 & -1 \\
p & 1 \\
\end{array}
\right]^{-1}
\left[\begin{array}{cc}
0           &  -\frac{1}{\sqrt{\beta}}   \\[0.1cm]
\sqrt{\beta}e_p^T   &   0
\end{array}\right]          \nonumber       \\
& = & I_{p+1} - \frac{1}{p+1}
\left[\begin{array}{cc}
\frac{1}{\sqrt{\beta}}e_p  &   0 \\[0.1cm]
0      &  \sqrt{\beta}
\end{array}\right]
\left[\begin{array}{cc}
1   &  1 \\
-p  &  1
\end{array}\right]
\left[\begin{array}{cc}
0          &  -\frac{1}{\sqrt{\beta}} \\[0.1cm]
\sqrt{\beta}e_p^T  &   0
\end{array}\right]     \nonumber  \\
& = &  I_{p+1} - \frac{1}{p+1}
\left[\begin{array}{cc}
e_p e_p^T   &   -\frac{1}{\beta}e_p \\[0.1cm]
\beta e_p^T          &     p
\end{array}\right].
\end{eqnarray}
Note that $\mathcal{Q}=\mathcal{Q}_0\otimes I_m$ and $\mathcal{D}=\mathcal{D}_0\otimes I_m$, where $\otimes$ denotes the Kronecker product of two matrices. Then, it follows from basic properties of the Kronecker product that
$$\mathcal{M}=\mathcal{Q}^{-T}\mathcal{D}=(\mathcal{Q}_0\otimes I_m)^{-T}(\mathcal{D}_0\otimes I_m)=(\mathcal{Q}_0^{-T}\otimes I_m^{-T})(\mathcal{D}_0\otimes I_m)= (\mathcal{Q}_0^{-T}\mathcal{D}_0)\otimes (I_m^{-T}I_m)=\mathcal{M}_0\otimes I_m,$$
which is indeed the matrix defined in \eqref{Matrix-M-T}. The proof is complete.
\end{proof}
\begin{property}
For the matrix $\mathcal{H}$ defined in \eqref{Matrix-M}, it holds that
\begin{equation}\label{Matrix-H-T}
\mathcal{H} =
       \left[\begin{array}{ccccc}
       2\beta I_m &    \beta I_m  &  \cdots   &  \beta I_m  & 0   \\
       \beta I_m    &    \ddots  & \ddots     & \vdots    & \vdots \\
       \vdots   & \ddots  &   \ddots  &   \beta I_m &   0\\
       \beta I_m  &  \cdots    &   \beta I_m &    2\beta I_m &        0\\
       0  &     0   &  \cdots & 0  & \frac{1}{\beta}(1+p) I_m
       \end{array}\!\!\right].
\end{equation}
\end{property}
\begin{proof}
Recall \eqref{Q0} and obtain
$$ \mathcal{H}_0:=\mathcal{Q}_0\mathcal{D}_0^{-1}\mathcal{Q}_0^T= \left[\begin{array}{cc}
  \beta I_p &  e_p  \\[0.1cm]
 -e_p^T   &   \frac{1}{\beta}
 \end{array}\right]
\left[\begin{array}{cc}
  \frac{1}{\beta} I_p &  0  \\[0.1cm]
   0    &   \beta
 \end{array}\right]
 \left[\begin{array}{cc}
\beta I_p & - e_p  \\[0.1cm]
 e_p^T    &   \frac{1}{\beta}
\end{array}\right] =
\left[\begin{array}{cc}
 \beta (I_p+e_pe_p^T)  &  0      \\[0.1cm]
  0     &  \frac{1}{\beta}(1+p)
\end{array}\right]. $$
Then, it follows from basic properties of the Kronecker product that
$$\mathcal{H} = \mathcal{Q}\mathcal{D}^{-1}\mathcal{Q}^T=(\mathcal{Q}_0\otimes I_m)(\mathcal{D}_0^{-1}\otimes I_m^{-1})(\mathcal{Q}_0^T\otimes I_m)=(\mathcal{Q}_0\mathcal{D}_0^{-1}\mathcal{Q}_0^T)\otimes (I_mI_m^{-1}I_m)=\mathcal{H}_0\otimes I_m.$$
This is indeed \eqref{Matrix-H-T}, and the proof is complete.
\end{proof}

\subsection{Notations}

It is clear that $\mathcal{M}_0$ defined in \eqref{M0-T} is the sum of an identity matrix and a rank-two matrix,  and that $\mathcal{M}=\mathcal{M}_0\otimes I_m$. The matrix $\mathcal{M}$ is thus a \textbf{block rank-two matrix}.  At the end of this section, we also define some notations for further analysis. More specifically, we denote by $\Omega^\ast$  the solution set of the VI \eqref{VI1} and define $\xi\in\Re^{(p+1)m\times (p+1)m}$ as
\begin{equation}\label{Xi-notation}
  \xi:= Pw
\end{equation}
for any $w\in\Omega$, where $P$ is the matrix defined in \eqref{Matrix-wQ}.   Accordingly, we further define
\begin{equation}\label{Xi-notation1}
\Xi=\big\{Pw \mid w\in  \Omega\big\} \quad \hbox{and} \quad  \Xi^\ast=\big\{ Pw^\ast  \mid  w^*\in  \Omega^*\big\}.
\end{equation}

Note that
\begin{equation}
\xi= Pw=\left[\!\!\begin{array}{ccccc}
A_1   &           &                &           &    0          \\[0.1cm]
          &  A_2  &                &           &    0          \\[0.1cm]
          &           &   \ddots   &           &  \vdots     \\[0.1cm]
          &           &                &  A_p  &    0          \\[0.1cm]
0        &    0     &   \cdots   &     0    &   I_m
 \end{array}\!\!\right]\left(\!\begin{array}{c}
                     x_1\\[-0.1cm]
                     \vdots\\[-0.1cm]
                     x_p\\
                     \lambda
 \end{array}\! \right)=\left(\!\begin{array}{c}
                  A_1    x_1\\[-0.1cm]
                     \vdots\\[-0.1cm]
                  A_p    x_p\\
                     \lambda
 \end{array}\! \right),
\end{equation}
and recall the matrix $\mathcal{H}$ defined in \eqref{Matrix-H-T}. For convenience, we also define the following $p$-partitioned notations:
\begin{equation}\label{Matrix-H-T2}
\hat{\xi}=\left(\!\begin{array}{c}
                  A_1    x_1\\[-0.1cm]
                     \vdots\\[-0.1cm]
                  A_p    x_p
 \end{array}\! \right)  \quad \hbox{and} \quad
\hat{\mathcal{H}} =
\left[\begin{array}{cccc}
       2\beta I_m &    \beta I_m  &  \cdots   &  \beta I_m     \\
       \beta I_m    &    \ddots  & \ddots     & \vdots     \\
       \vdots   & \ddots  &   \ddots  &   \beta I_m    \\
       \beta I_m  &  \cdots    &   \beta I_m &   2\beta I_m \\
       \end{array}\!\!\right].
\end{equation}

\section{Algorithm and remarks}\label{sec3}
\setcounter{equation}{0}
\setcounter{remark}{0}
\setcounter{proposition}{0}

In this section, we present a rank-two relaxed parallel splitting version of the ALM with a step size in $(0,2)$ for the separable convex programming problem \eqref{A-Problem-M} with $p\ge 2$. We also elaborate on its difference from the algorithms in \cite{Deng2017,HeHouYuanSIOPT,HeTaoYuanIMA,HXY2016}.

\subsection{Algorithm}

Recall that the matrix $\mathcal{M}$ defined in \eqref{Matrix-M} (specified by \eqref{Matrix-M-T}) is block rank-two, and that we use the notations  $\xi^k=(A_1x_1^k;\ldots;A_p x_p^k; \lambda^k)$ and  $\tilde{\xi}^k=(A_1\tilde{x}_1^k;\ldots;A_p\tilde{x}_p^k;\tilde{\lambda}^k)$.
To solve the separable convex programming problem \eqref{A-Problem-M} with $p\ge 2$, with given $\xi^k$, the rank-two relaxed parallel splitting version of the ALM generates the new iterate $\xi^{k+1}$ via
 \begin{subequations} \label{Rank2M}
\begin{numcases}{}
\label{Pall-XL}\left\{
    \begin{array}{ll}
    \tilde{x}_1^{k} \; \in \; \arg\min\big\{  L(x_1,x_2^k, \ldots, x_p^k, \lambda^k)  +\frac{\beta}{2} \|A_1x_1-A_1x_1^k\|^2  \mid x_1\in\mathcal{X}_1  \big\},\\
       \quad \qquad \qquad \vdots \\
   \tilde{x}_i^{k}  \; \in \; \arg\min\big\{ L(x_1^k,\ldots,x_{i-1}^k, x_i, x_{i+1}^k, \ldots, x_p^k, \lambda^k)   +\frac{\beta}{2} \|A_ix_i-A_ix_i^k\|^2   \mid   x_i \in\mathcal{X}_i \big\},\\
       \quad \qquad  \qquad \vdots \\
   \tilde{x}_p^{k} \; \in \; \arg\min \big\{ L(x_1^k,\ldots, x_{p-1}^k, x_p, \lambda^k)  +\frac{\beta}{2} \|A_px_p-A_px_p^k\|^2  \mid   x_p \in\mathcal{X}_p \big\},\\[0.2cm]
   \tilde{\lambda}^{k} \; = \; \lambda^k - \beta(\sum_{i=1}^p A_i x_i^k-b),
    \end{array}
      \right. \\[0.2cm]
\label{CorrectionS} \;\; \xi^{k+1} \,=\; \xi^k-\alpha \mathcal{M}(\xi^k-\tilde{\xi}^k)\; \;\hbox{with} \;\; \alpha \in (0,2).
\end{numcases}
\end{subequations}
For obvious reasons, we call \eqref{Pall-XL} and (\ref{CorrectionS}) the parallel splitting ALM step and the rank-two relaxation step, respectively.

\begin{remark}
For the parallel splitting ALM step \eqref{Pall-XL}, ignoring some constant terms, we can simplify its $x_i$-subproblems as
\begin{equation}\label{Core-x1}
  \tilde{x}_i^k \in   \arg\min\Big\{\theta_i(x_i)+\frac{\beta}{2}\big\|A_ix_i-\big(A_ix_i^k+\frac{1}{\beta}\lambda^k\big)\big\|^2  \mid x_i \in\mathcal{X}_i \Big\}.
\end{equation}
Similarly, the $x_i$-subproblem in \eqref{PSALM-P} can be rewritten as
\begin{equation}\label{Core-x2}
  x_i^{k+1} \in \arg\min\Big\{\theta_i(x_i)+\frac{\beta}{2}\big\|A_ix_i-\big(-\sum_{j\neq i}A_jx_j^k+b+\frac{1}{\beta}\lambda^k\big)\big\|^2  \mid x_i \in\mathcal{X}_i \Big\},
\end{equation}
and the $x_i$-subproblems in \eqref{PJALMP} can be rewritten as
\begin{equation}\label{Core-x3}
  x_i^{k+1}  \in \arg\min\Big\{\theta_i(x_i)+\frac{(1+\tau)\beta}{2}\big\|A_ix_i-q^k\big\|^2  \mid x_i \in\mathcal{X}_i \Big\},
\end{equation}
with
$$q^k=\frac{1}{1+\tau}(-\sum_{j\neq i}A_jx_j^k+b+\frac{1}{\beta}\lambda^k)+\frac{1}{1+\tau}\tau A_ix_i^k.$$
Therefore, the $x_i$-subproblems in \eqref{Pall-XL} are analytically of the same difficulty as those in \eqref{DADMM}, \eqref{PSALM-P} and \eqref{PJALMP}, only with the difference in the constant vectors and the coefficients of the respective quadratic terms. Recall that the additional parameter $\tau$ subject to the condition (\ref{PJALMC}) is removed in (\ref{Pall-XL}).
\end{remark}

\begin{remark}
For the rank-two relaxation step (\ref{CorrectionS}), it follows from the definition of $\mathcal{M}$ in \eqref{Matrix-M-T} that it can be specified as
\begin{equation}\label{Correction}
  \left(\!\!\!\begin{array}{c}
                   A_1  x_1^{k+1}\\
                    \vdots \\
                   A_p   x_p^{k+1}\\[0.1cm]
                   \lambda^{k+1}  \end{array} \!\!\!\right)=
\left(\!\!\!\begin{array}{c}
                   A_1  x_1^k\\
                    \vdots \\
                   A_p   x_p^k \\[0.1cm]
                   \lambda^{k}  \end{array}\! \!\!\right)-\alpha
\left(\!\!\!\begin{array}{c}
                        A_1  x_1^{k}- A_1 \tilde{x}_1^{k} \\
                        \vdots \\
                        A_p   x_p^{k}- A_p\tilde{x}_p^{k}\\[0.1cm]
                        \lambda^{k} -\tilde{\lambda}^k  \end{array} \!\!\!\right) + \frac{\alpha}{p+1}
   \left(\!\!\! \begin{array}{c}
                         \sum_{i=1}^p(A_ix_i^k -A_i\tilde{x}_i^k) - \frac{1}{\beta} (\lambda^{k} -\tilde{\lambda}^k) \\
                         \vdots \\
                         \sum_{i=1}^p(A_ix_i^k -A_i\tilde{x}_i^k) - \frac{1}{\beta} (\lambda^{k} -\tilde{\lambda}^k)   \\[0.1cm]
                         \beta\sum_{i=1}^p(A_ix_i^k -A_i\tilde{x}_i^k)  +p(\lambda^{k} -\tilde{\lambda}^k)  \end{array}\!\!\! \right).
\end{equation}
It is clear that only $A_ix_i^k$ ($i=1,\ldots,p$), rather than $x_i^k$ ($i=1,\ldots,p$), and $\lambda^k$ are needed as the input for the $(k+1)$-th iteration to solve the $x_i$-subproblems in (\ref{Pall-XL}). Hence, the rank-two relaxation step (\ref{Correction}) can be executed in terms of  $\{A_ix_i \}(i=1,\ldots,p) $ and $\{\lambda\}$, which is extremely easy, while $x_i$'s only need to be solved once at the last iteration.
\end{remark}

\subsection{Connection with other algorithms}

Note that the proposed algorithm (\ref{Rank2M}), as well as the existing algorithms \eqref{PSALM-P}-\eqref{PSALM-C} and \eqref{PJALMP}, are all based on the fact that the direct parallel splitting version of the ALM (\ref{PSALM-P}) is not necessarily convergent, and their common goal is modifying the root scheme (\ref{PSALM-P}) slightly to maintain the subproblems in (\ref{PSALM-P}) as much as possible while the convergence can be guaranteed. Thus, it is meaningful to discern the difference of these algorithms by calibrating their respective difference from the root scheme (\ref{PSALM-P}). For this purpose, let us denote
\[\label{M1}
\mathcal{M}_1 =
  \left[\!\!\begin{array}{ccccc}
  I_{n_1}   &                   &                 &                   &    0          \\[0.1cm]
                  &    I_{n_2}  &                 &                   &    0          \\[0.1cm]
                  &                   &   \ddots    &                   &  \vdots     \\[0.1cm]
                  &                   &                 &   I_{n_p}  &    0          \\[0.1cm]
  -\beta A_1        &    -\beta A_2      &   \cdots     &     -\beta A_p     &  I_m
 \end{array}\!\!\right],
 \]
and recall the notation $w=(x_1;\ldots;x_p;\lambda)$ in \eqref{VI-S} as well as  \eqref{Core-x2}. Then, it is easy to see that the direct parallel splitting version of the ALM (\ref{PSALM-P}) can be represented purposively as the following prediction-correction framework.

\begin{center}\fbox{
 \begin{minipage}{16.3cm}
\smallskip
\noindent{\bf{Prediction-correction representation for the direct parallel splitting ALM (\ref{PSALM-P}). }}

\medskip
\noindent{\textbf{(Prediction Step)}} With given $w^k=(x_1^k; \ldots; x_p^k; \lambda^k)$, $\tilde{w}^k=(\tilde{x}_1^k;\ldots; \tilde{x}_p^k;\tilde{\lambda}^k)$ satisfies
\begin{subequations}\label{PALM-PC}
\begin{equation}\label{PALM-Pre}
\left\{
    \begin{array}{lcl}
    \tilde{\lambda}^{k} & = & \lambda^k - \beta(\sum_{i=1}^p A_i x_i^k-b),\\[0.2cm]
    \tilde{x}_1^{k} & = & \arg\min\big\{\theta_1(x_1) + \frac{\beta}{2} \|A_1x_1-(A_1x_1^k+\frac{1}{\beta}\tilde{\lambda}^k)\|^2  \mid x_1\in\mathcal{X}_1  \big\},\\[-0.1cm]
       & & \qquad \vdots \\[-0.1cm]
   \tilde{x}_i^{k}  & = & \arg\min\big\{\theta_i(x_i)  + \frac{\beta}{2} \|A_ix_i-(A_ix_i^k+\frac{1}{\beta}\tilde{\lambda}^k)\|^2   \mid   x_i \in\mathcal{X}_i \big\},\\[-0.1cm]
       & &  \qquad \vdots \\[-0.1cm]
   \tilde{x}_p^{k} & = & \arg\min \big\{\theta_p(x_p) + \frac{\beta}{2} \|A_px_p-(A_px_p^k+\frac{1}{\beta}\tilde{\lambda}^k)\|^2  \mid   x_p \in\mathcal{X}_p \big\}.
    \end{array}
  \right.
\end{equation}
\noindent{\textbf{(Correction Step)}}  With the predictor $\tilde{w}^k$ represented by \eqref{PALM-Pre}, the new iterate $w^{k+1}$  can be generated by
\begin{equation}\label{PALM-Cor}
w^{k+1}=w^k-\mathcal{M}_1(w^k-\tilde{w}^k).
\end{equation}
\end{subequations}
 \end{minipage}}
\end{center}

Now, with the prediction-correction representation \eqref{PALM-Pre}-\eqref{PALM-Cor} of the root scheme \eqref{PSALM-P}, we calibrate the difference of various algorithms from the benchmark \eqref{PSALM-P} by representing them also in the prediction-correction framework.

\begin{itemize}
  \item For the modified Jacobian splitting ALM \eqref{PSALM-P}-\eqref{PSALM-C} proposed in \cite{HeHouYuanSIOPT} (denoted by ``JSALM" for short), it keeps \eqref{PALM-Pre} but with the more conservative  correction step
 \begin{equation}\label{JSALM-C}
 w^{k+1}=w^k-\alpha \mathcal{M}_1(w^k-\tilde{w}^k),
 \end{equation}
where $\alpha$  is required to satisfy the condition \eqref{PSALM-step}.
By comparing \eqref{PALM-Cor} and \eqref{JSALM-C}, it can be intuitively understood that the JSALM \eqref{PSALM-P}-\eqref{PSALM-C} overcomes the divergence of the direct parallel splitting version of the ALM (\ref{PSALM-P}) by replacing the correction step \eqref{PALM-Cor} with the more conservative one \eqref{JSALM-C}. This strategy ensures the convergence theoretically, but it becomes more conservative because the step size $\alpha$ in \eqref{JSALM-C} is diminishing when $p$ increases.

\item For the proximal Jacobian splitting ALM \eqref{PJALMP} proposed in \cite{Deng2017,HeTaoYuanIMA,HXY2016} (denoted by ``PJALM" for short),  note that the only difference between \eqref{PSALM-P} and \eqref{PJALMP} is the additional proximal terms regarding the $x_i$-subproblems. The PJALM \eqref{PJALMP} thus can be represented as a prediction-correction framework, with the same correction step \eqref{PALM-Cor} while its prediction step is
    \begin{equation}\label{PJALM-P}
    \left\{
    \begin{array}{lcl}
    \tilde{\lambda}^{k} & = & \lambda^k - \beta(\sum_{i=1}^p A_i x_i^k-b),\\[0.2cm]
    \tilde{x}_1^{k} & = & \arg\min\big\{\theta_1(x_1) + \frac{\beta}{2} \|A_1x_1-(A_1x_1^k+\frac{1}{\beta}\tilde{\lambda}^k)\|^2 +\frac{\tau\beta}{2}\|A_1x_1-A_1x_1^k\|^2 \mid x_1\in\mathcal{X}_1  \big\},\\[-0.1cm]
       & & \qquad \vdots \\[-0.1cm]
    \tilde{x}_i^{k}  & = & \arg\min\big\{\theta_i(x_i)  + \frac{\beta}{2} \|A_ix_i-(A_ix_i^k+\frac{1}{\beta}\tilde{\lambda}^k)\|^2  +\frac{\tau\beta}{2}\|A_ix_i-A_ix_i^k\|^2 \mid   x_i \in\mathcal{X}_i \big\},\\[-0.1cm]
       & &  \qquad \vdots \\[-0.1cm]
    \tilde{x}_p^{k} & = & \arg\min \big\{\theta_p(x_p) + \frac{\beta}{2} \|A_px_p-(A_px_p^k+\frac{1}{\beta}\tilde{\lambda}^k)\|^2+\frac{\tau\beta}{2}\|A_px_p-A_px_p^k\|^2  \mid   x_p \in\mathcal{X}_p \big\}.
    \end{array}
    \right.
   \end{equation}
  Comparing with \eqref{PALM-Pre} and \eqref{PJALM-P}, we know that the PJALM  \eqref{PJALMP} adjusts the prediction step \eqref{PALM-Pre} by proximally regularizing all the $x_i$-subproblems in \eqref{PALM-Pre} with the proximal coefficient $\frac{\tau\beta}{2}$ and $\tau$ is required to satisfy the condition \eqref{PJALMC}. Hence, the PJALM  \eqref{PJALMP} overcomes the divergence of the direct parallel splitting version of the ALM \eqref{PSALM-P} by replacing the prediction step (\ref{PALM-Pre}) with the more conservative one \eqref{PJALM-P}. Note that larger values of $p$ imply larger values of $\tau$ and hence smaller step sizes for solving the $x_i$-subproblems in \eqref{PJALM-P}.

  \item For the rank-two relaxed parallel splitting version of the ALM \eqref{Rank2M}, according to \eqref{Core-x1}, we can rewrite the parallel splitting ALM step \eqref{Pall-XL} as
\begin{equation}\label{Pall-XL-1}
  \left\{
    \begin{array}{lcl}
   \tilde{\lambda}^{k} & = & \lambda^k - \beta(\sum_{i=1}^p A_i x_i^k-b),  \\[0.3cm]
    \tilde{x}_1^{k} & \in & \arg\min\big\{\theta_1(x_1) + \frac{\beta}{2} \|A_1x_1-(A_1x_1^k+\frac{1}{\beta}\lambda^k)\|^2  \mid x_1\in\mathcal{X}_1  \big\},\\
       & & \qquad \vdots \\
   \tilde{x}_i^{k}  & \in & \arg\min\big\{\theta_i(x_i)  + \frac{\beta}{2} \|A_ix_i-(A_ix_i^k+\frac{1}{\beta}\lambda^k)\|^2   \mid   x_i \in\mathcal{X}_i \big\},\\
       & &  \qquad \vdots \\
   \tilde{x}_p^{k} & \in & \arg\min \big\{\theta_p(x_p)  + \frac{\beta}{2} \|A_px_p-(A_px_p^k+\frac{1}{\beta}\lambda^k)\|^2  \mid   x_p \in\mathcal{X}_p \big\}.
    \end{array}
  \right.
\end{equation}
Hence, the rank-two relaxed parallel splitting version of the ALM \eqref{Rank2M} can also be represented as a prediction-correction framework, i.e., \eqref{Pall-XL-1}+\eqref{CorrectionS}. Compared with \eqref{PALM-Pre}, the only difference in \eqref{Pall-XL-1} is the constant vectors regarding $\lambda$ in the quadratic terms of the $x_i$-subproblems, while all major features and structures of \eqref{PALM-Pre} are maintained in \eqref{Pall-XL-1}. The $x_i$-subproblems in \eqref{Pall-XL-1} also differ from those in \eqref{PJALM-P} in that the coefficients of the quadratic terms are irrelevant with $\tau$ and thus do not depend on $p$. Recall that the block rank-two matrix $\mathcal{M}$ is used to correct the relaxation step \eqref{CorrectionS}. Hence, the proposed rank-two relaxed parallel splitting version of the ALM \eqref{Rank2M} adjusts both the prediction and correction steps of the direct parallel splitting version of the ALM \eqref{PSALM-P}, but more mildly than \eqref{JSALM-C} and \eqref{PJALM-P}. Both the new prediction step \eqref{Pall-XL-1} and the correction step \eqref{CorrectionS} do not generate any more difficult subproblems, nor do they require any new conditions on new parameters. Meanwhile, it maintains the advantage of a step size in $(0,2)$ for the relaxation step, while the convergence is still ensured rigorously.
\end{itemize}

\section{Convergence}\label{sec4}
\setcounter{equation}{0}
\setcounter{remark}{0}
\setcounter{proposition}{0}

In this section, we conduct convergence analysis for the proposed new algorithm (\ref{Rank2M}). As mentioned, to execute the parallel splitting ALM step \eqref{Pall-XL}, only $A_ix_i \, (i=1,\ldots,p)$ and $\lambda$ are required. Thus, the convergence analysis is conducted in the context of the sequence $\big\{\xi^k\big\}$. Recall that we also use the notation $\tilde{w}^k$ to denote the output of the parallel splitting ALM step \eqref{Pall-XL}. The following lemma characterizes the difference of $\tilde{w}^k$ from a solution point of the VI \eqref{VI1}.

\begin{lemma}\label{VI-Lemma}
Let $\xi$ be defined in \eqref{Xi-notation}, and $\tilde{w}^k=(\tilde{x}_1^k; \ldots; \tilde{x}_p^k; \tilde{\lambda}^k)$  be the output of the parallel splitting ALM step \eqref{Pall-XL} with given input $\xi^k=(A_1x_1^k;\ldots;A_px_p^k;\lambda^k)$.  Then we have
\begin{equation}\label{Prediction-w}
\tilde{w}^k\in \Omega,\quad  \theta(x) - \theta(\tilde{x}^k) +  (w- \tilde{w}^k )^T F(\tilde{w}^k) \ge (\xi - \tilde{\xi}^k )^T\mathcal{Q} (\xi^k-\tilde{\xi}^k), \quad  \forall \; w\in\Omega,
\end{equation}
where $\mathcal{Q}$ is the matrix defined in \eqref{Matrix-QS}.
\end{lemma}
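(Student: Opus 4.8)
The plan is to extract a variational inequality from each $x_i$-subproblem of \eqref{Pall-XL} via Lemma~\ref{CP-TF}, splice in the dual update to trade $\lambda^k$ for $\tilde{\lambda}^k$, and then add everything up so that the aggregate collapses into the single inequality \eqref{Prediction-w}. Feasibility is immediate: each $\tilde{x}_i^k$ is a minimizer over $\mathcal{X}_i$ and hence lies in $\mathcal{X}_i$, while $\tilde{\lambda}^k\in\Re^m$, so $\tilde{w}^k\in\mathcal{X}_1\times\cdots\times\mathcal{X}_p\times\Re^m=\Omega$.

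For the substantive part, I would work with the simplified subproblems \eqref{Core-x1}. Applying Lemma~\ref{CP-TF} with $f(x_i)=\frac{\beta}{2}\|A_ix_i-(A_ix_i^k+\frac{1}{\beta}\lambda^k)\|^2$, whose gradient at $\tilde{x}_i^k$ is $\beta A_i^TA_i(\tilde{x}_i^k-x_i^k)-A_i^T\lambda^k$, yields for each $i$ and all $x_i\in\mathcal{X}_i$ the inequality
$$\theta_i(x_i)-\theta_i(\tilde{x}_i^k)+(x_i-\tilde{x}_i^k)^T\big[\beta A_i^TA_i(\tilde{x}_i^k-x_i^k)-A_i^T\lambda^k\big]\ge0.$$
The crucial step is to eliminate $\lambda^k$ in favour of $\tilde{\lambda}^k$ through the dual update $\tilde{\lambda}^k=\lambda^k-\beta(\sum_iA_ix_i^k-b)$, i.e. $\lambda^k-\tilde{\lambda}^k=\beta(\sum_iA_ix_i^k-b)$. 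Writing $-A_i^T\lambda^k=-A_i^T\tilde{\lambda}^k-A_i^T(\lambda^k-\tilde{\lambda}^k)$ and factoring $A_i$ out of every remaining term, each inequality becomes
$$\theta_i(x_i)-\theta_i(\tilde{x}_i^k)+(x_i-\tilde{x}_i^k)^T(-A_i^T\tilde{\lambda}^k)\ge(A_ix_i-A_i\tilde{x}_i^k)^T\big[\beta(A_ix_i^k-A_i\tilde{x}_i^k)+(\lambda^k-\tilde{\lambda}^k)\big],$$
whose left-hand side is exactly the $x_i$-contribution to $\theta(x)-\theta(\tilde{x}^k)+(w-\tilde{w}^k)^TF(\tilde{w}^k)$, since the $x_i$-row of $F(\tilde{w}^k)$ is $-A_i^T\tilde{\lambda}^k$.

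For the dual block I would rearrange the same update into $\sum_iA_i\tilde{x}_i^k-b=\frac{1}{\beta}(\lambda^k-\tilde{\lambda}^k)-\sum_i(A_ix_i^k-A_i\tilde{x}_i^k)$, which is precisely the $\lambda$-row of $F(\tilde{w}^k)$; pairing it with $(\lambda-\tilde{\lambda}^k)^T$ supplies the $\lambda$-contribution to the left-hand side together with its matching right-hand side term. Summing the $p$ primal inequalities and adding this dual identity, the left-hand side assembles into $\theta(x)-\theta(\tilde{x}^k)+(w-\tilde{w}^k)^TF(\tilde{w}^k)$, and the right-hand side becomes a bilinear pairing of $\xi^k-\tilde{\xi}^k=(A_ix_i^k-A_i\tilde{x}_i^k;\,\lambda^k-\tilde{\lambda}^k)$ against $\xi-\tilde{\xi}^k=(A_ix_i-A_i\tilde{x}_i^k;\,\lambda-\tilde{\lambda}^k)$.

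The last step, which I expect to be the only delicate one, is to recognize that this accumulated bilinear form equals $(\xi-\tilde{\xi}^k)^T\mathcal{Q}(\xi^k-\tilde{\xi}^k)$ exactly, with $\mathcal{Q}$ from \eqref{Matrix-QS}. This amounts to matching couplings block by block: the diagonal $\beta I_m$ blocks of $\mathcal{Q}$ account for the primal-primal terms $\beta(A_ix_i-A_i\tilde{x}_i^k)^T(A_ix_i^k-A_i\tilde{x}_i^k)$, the last-column $I_m$ blocks account for the primal-dual terms $(A_ix_i-A_i\tilde{x}_i^k)^T(\lambda^k-\tilde{\lambda}^k)$, the last-row $-I_m$ blocks account for the dual-primal terms $-(\lambda-\tilde{\lambda}^k)^T\sum_i(A_ix_i^k-A_i\tilde{x}_i^k)$, and the $\frac{1}{\beta}I_m$ corner accounts for the dual-dual term. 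The main obstacle is thus the careful sign-and-placement bookkeeping confirming that the nonsymmetric block pattern of $\mathcal{Q}$ (with $+I_m$ in the last column but $-I_m$ in the last row) reproduces exactly the cross terms generated by the elimination of $\lambda^k$, rather than some other quadratic form of the same order.
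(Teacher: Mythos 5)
Your proposal is correct and follows essentially the same route as the paper's proof: per-block optimality conditions via Lemma \ref{CP-TF} (the paper works from \eqref{Pall-XL} directly, you from the equivalent form \eqref{Core-x1}, which yields the identical inequality), substitution of $-A_i^T\lambda^k = -A_i^T\tilde{\lambda}^k - A_i^T(\lambda^k-\tilde{\lambda}^k)$ using the dual update, a trivial identity/VI for the $\lambda$-block, and summation followed by identifying the aggregated bilinear form with $(\xi-\tilde{\xi}^k)^T\mathcal{Q}(\xi^k-\tilde{\xi}^k)$. Your block-by-block matching of the nonsymmetric pattern of $\mathcal{Q}$ (diagonal $\beta I_m$, last column $+I_m$, last row $-I_m$, corner $\frac{1}{\beta}I_m$) is exactly the bookkeeping the paper performs in its final displayed inequality.
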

\begin{proof}
For each $x_i$-subproblem in \eqref{Pall-XL}, it follows from Lemma \ref{CP-TF} that
$$\tilde{x}_i^k\in {\cal X}_i, \quad  \theta_i(x_i)-\theta_i(\tilde{x}_i^k) + (x_i- \tilde{x}_i^k)^T\bigl[- A_i^T\lambda^k  + \beta A_i^T(A_i\tilde{x}_i^k -A_ix_i^k) \bigr] \ge 0, \quad \forall \; x_i\in {\cal X}_i,$$
which can be further rewritten as
\begin{equation}\label{Full-px}
\tilde{x}_i^k\in {\cal X}_i, \quad  \theta_i(x_i)-\theta_i(\tilde{x}_i^k) +  (x_i- \tilde{x}_i^k)^T\bigl[-A_i^T\tilde{\lambda}^k + \beta A_i^T(A_i\tilde{x}_i^k -A_ix_i^k) + A_i^T(\tilde{\lambda}^k-\lambda^k) \bigr] \ge 0, \quad \forall \; x_i\in {\cal X}_i.
\end{equation}
For the $\lambda$-subproblem in  \eqref{Pall-XL}, we have
$$\Big(\sum_{i=1}^p A_i \tilde{x}_i^{k}-b\Big) -\sum_{i=1}^p (A_i\tilde{x}_i^{k}-A_ix_i^k)+\frac{1}{\beta}(\tilde{\lambda}^{k} - \lambda^k)=0,$$
which is also equivalent to
\begin{equation}\label{Full-pl}
  \tilde{\lambda}^k \in \Re^m, \quad   (\lambda-\tilde{\lambda}^k)^T\Big[(\sum_{i=1}^p A_i \tilde{x}_i^{k}-b) -\sum_{i=1}^p (A_i\tilde{x}_i^{k}-A_ix_i^k)+\frac{1}{\beta}(\tilde{\lambda}^{k} - \lambda^k)\Big]\geq0, \quad \forall \; \lambda\in \Re^m.
\end{equation}
Adding \eqref{Full-px} and \eqref{Full-pl}, we have
\begin{eqnarray*}
\lefteqn{\sum_{i=1}^p\theta_i(x_i)-\sum_{i=1}^p\theta_i(\tilde{x}_i^k)+
\left(\!\!\begin{array}{c}
     x_1-\tilde{x}_1^k\\
     \vdots\\
     x_p-\tilde{x}_p^k\\[0.1cm]
    \lambda-\tilde{\lambda}^k\\
\end{array}\!\!\right)^T
\left(\!\!\begin{array}{c}
     - A_1^T\tilde{\lambda}^k\\
     \vdots\\
     - A_p^T\tilde{\lambda}^k\\[0.1cm]
    \sum_{i=1}^p A_i \tilde{x}_i^{k}-b \\
\end{array}\!\!\right)} \\
& \geq &
\left(\!\!\begin{array}{c}
     A_1(x_1-\tilde{x}_1^k)\\
     \vdots\\
     A_p(x_p-\tilde{x}_p^k)\\[0.1cm]
    \lambda-\tilde{\lambda}^k\\
\end{array}\!\!\right)^T
\left(\!\!\begin{array}{c}
     \beta (A_1x_1^k -A_1\tilde{x}_1^k)  + (\lambda^k - \tilde{\lambda}^k) \\
     \vdots\\
     \beta (A_px_p^k -A_p\tilde{x}_p^k)  + (\lambda^k - \tilde{\lambda}^k) \\[0.1cm]
    -\sum_{i=1}^p (A_ix_i^k -A_i\tilde{x}_i^{k})+\frac{1}{\beta}( \lambda^k-\tilde{\lambda}^{k} )\\
\end{array}\!\!\right), \quad \forall\;
(x_1,\ldots,x_p,\lambda)\in\Omega.
\end{eqnarray*}
Recall the notations in \eqref{VI-S}, \eqref{Xi-notation}, \eqref{Xi-notation1}, and the matrix $\mathcal{Q}$ defined in \eqref{Matrix-QS}. The assertion of this lemma follows immediately.
\end{proof}

Recall the matrices $\mathcal{H}$ and $\mathcal{M}$ defined in \eqref{Matrix-M}. It holds that
\begin{equation}\label{Matrix-GQ}
  \mathcal{H}\mathcal{M}=\mathcal{Q} \quad \hbox{and} \quad \mathcal{M}^T\mathcal{H}\mathcal{M}=\mathcal{D}.
\end{equation}
The following theorem shows the contraction property of the sequence $\{\xi^k\}$.
\begin{theorem}\label{direction}
Let $\xi$ and $\Xi^\ast$ be defined in \eqref{Xi-notation} and \eqref{Xi-notation1}, respectively, and $\{\xi^{k}\}$ be the sequence generated by the proposed new algorithm \eqref{Rank2M} with $\alpha\in(0,2)$ for \eqref{A-Problem-M}, and $\tilde{w}^k$ be the output of the parallel splitting ALM step \eqref{Pall-XL}. Then, we have
\begin{equation}\label{HauptA1}
\theta(x) - \theta(\tilde{x}^k)  +  (w - \tilde{w}^k)^T F(w)\ge\frac{1}{2\alpha}\bigl( \|\xi- \xi^{k+1}\|_{\mathcal{H}}^2- \|\xi- \xi^{k}\|_{\mathcal{H}}^2\bigr)
       +  \frac{1}{2}(2-\alpha) \|{\xi}^k - \tilde{\xi}^k  \|_{\mathcal{D}}^2, \quad \forall \,  w \in {\Omega},
\end{equation}
and
\begin{equation}\label{XIcontr}
  \|\xi^{k+1} - \xi^*\|_{\mathcal{H}}^2 \le   \| \xi^k - \xi^*  \|_{\mathcal{H}}^2   - \alpha(2-\alpha)  \|\xi^k - \tilde{\xi}^k\|_\mathcal{D}^2, \quad \forall \;  \xi^*\in \Xi^\ast,
\end{equation}
where $\mathcal{D}$ and $\mathcal{H}$ are the matrices defined in \eqref{Matrix-QS} and \eqref{Matrix-M}, respectively.
\end{theorem}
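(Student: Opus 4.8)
The plan is to follow the standard prediction--correction route in the variational inequality setting. I would start from the prediction inequality established in Lemma \ref{VI-Lemma},
\[
\theta(x) - \theta(\tilde{x}^k) + (w - \tilde{w}^k)^T F(\tilde{w}^k) \ge (\xi - \tilde{\xi}^k)^T \mathcal{Q}(\xi^k - \tilde{\xi}^k), \quad \forall\, w \in \Omega,
\]
and first replace $F(\tilde{w}^k)$ by $F(w)$ on the left-hand side. This is legitimate because the operator $F$ in \eqref{VI-S} obeys $(w-\tilde{w}^k)^T(F(w)-F(\tilde{w}^k)) = 0$ by the skew-symmetry \eqref{Skew-S}, so the cross term is untouched. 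After this substitution the left-hand side coincides exactly with that of \eqref{HauptA1}, and it only remains to show that the quadratic form $(\xi - \tilde{\xi}^k)^T \mathcal{Q}(\xi^k - \tilde{\xi}^k)$ equals the right-hand side of \eqref{HauptA1}.

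The core of the argument is to massage this quadratic form using the correction step \eqref{CorrectionS} together with the three algebraic identities $\mathcal{H}\mathcal{M} = \mathcal{Q}$ and $\mathcal{M}^T\mathcal{H}\mathcal{M} = \mathcal{D}$ from \eqref{Matrix-GQ}, and $\mathcal{Q}^T + \mathcal{Q} = 2\mathcal{D}$ from \eqref{PQP}. Writing $d := \xi^k - \tilde{\xi}^k$, the correction \eqref{CorrectionS} reads $\xi^k - \xi^{k+1} = \alpha\mathcal{M}d$. Substituting $\mathcal{Q} = \mathcal{H}\mathcal{M}$ and splitting $\xi - \tilde{\xi}^k = (\xi - \xi^k) + d$, the part carrying $\xi - \xi^k$ becomes $\tfrac{1}{\alpha}(\xi - \xi^k)^T \mathcal{H}(\xi^k - \xi^{k+1})$, which the elementary identity $2a^T\mathcal{H}b = \|a+b\|_{\mathcal{H}}^2 - \|a\|_{\mathcal{H}}^2 - \|b\|_{\mathcal{H}}^2$ turns into $\tfrac{1}{2\alpha}\bigl(\|\xi - \xi^{k+1}\|_{\mathcal{H}}^2 - \|\xi - \xi^k\|_{\mathcal{H}}^2 - \|\xi^k - \xi^{k+1}\|_{\mathcal{H}}^2\bigr)$. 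The leftover term is $d^T \mathcal{H}\mathcal{M}d = d^T\mathcal{Q}d = \|d\|_{\mathcal{D}}^2$, using $\mathcal{Q}^T+\mathcal{Q} = 2\mathcal{D}$, while $\|\xi^k - \xi^{k+1}\|_{\mathcal{H}}^2 = \alpha^2 d^T\mathcal{M}^T\mathcal{H}\mathcal{M}d = \alpha^2\|d\|_{\mathcal{D}}^2$. Collecting the three pieces and simplifying $1 - \tfrac{\alpha}{2} = \tfrac{1}{2}(2-\alpha)$ reproduces precisely the right-hand side of \eqref{HauptA1}, finishing its proof.

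To obtain the contraction \eqref{XIcontr}, I would specialize \eqref{HauptA1} to an arbitrary solution point by setting $w = w^*\in\Omega^*$, so that $\xi = \xi^*$. Applying the optimality characterization \eqref{OVI} with test point $w = \tilde{w}^k$ gives $\theta(\tilde{x}^k) - \theta(x^*) + (\tilde{w}^k - w^*)^T F(w^*) \ge 0$, which is exactly the negative of the left-hand side of \eqref{HauptA1} evaluated at $w = w^*$; hence that left-hand side is nonpositive. Discarding it and multiplying through by $2\alpha > 0$ yields $\|\xi^{k+1} - \xi^*\|_{\mathcal{H}}^2 - \|\xi^k - \xi^*\|_{\mathcal{H}}^2 + \alpha(2-\alpha)\|\xi^k - \tilde{\xi}^k\|_{\mathcal{D}}^2 \le 0$, which is \eqref{XIcontr}.

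I expect the only delicate point to be the bookkeeping in the second paragraph: splitting the cross term correctly and tracking the powers of $\alpha$ so that the coefficient of $\|\xi^k - \tilde{\xi}^k\|_{\mathcal{D}}^2$ emerges as $\tfrac{1}{2}(2-\alpha)$, with the contribution $-\tfrac{\alpha}{2}\|d\|_{\mathcal{D}}^2$ coming precisely from the $\|\xi^k-\xi^{k+1}\|_{\mathcal{H}}^2$ term. Everything else follows directly from the three matrix identities and the monotonicity \eqref{Skew-S}. Positivity of $\mathcal{D}$ and $\mathcal{H}$, needed to read \eqref{XIcontr} as a genuine contraction when $\alpha\in(0,2)$, is apparent from their explicit block forms \eqref{Matrix-QS} and \eqref{Matrix-H-T}.
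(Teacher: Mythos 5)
Your proposal is correct and follows essentially the same route as the paper: the skew-symmetry substitution from \eqref{Skew-S}, the reduction of $(\xi-\tilde{\xi}^k)^T\mathcal{Q}(\xi^k-\tilde{\xi}^k)$ via the identities $\mathcal{Q}=\mathcal{H}\mathcal{M}$, $\mathcal{M}^T\mathcal{H}\mathcal{M}=\mathcal{D}$, $\mathcal{Q}^T+\mathcal{Q}=2\mathcal{D}$ together with the correction step, and finally setting $w=w^*$ and invoking \eqref{OVI} to obtain the contraction. The only cosmetic difference is the bookkeeping: you split $\xi-\tilde{\xi}^k=(\xi-\xi^k)+(\xi^k-\tilde{\xi}^k)$ and use a three-point identity, while the paper applies a four-point identity to $(\xi-\tilde{\xi}^k)^T\mathcal{H}(\xi^k-\xi^{k+1})$ directly; both yield the same decomposition \eqref{LEM-M-a}.
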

\begin{proof}
It follows from \eqref{Skew-S} that \eqref{Prediction-w} is equivalent to
\begin{equation}\label{M-PRE-1}
  \theta(x) - \theta(\tilde{x}^k)  +  (w - \tilde{w}^k)^T F(w) \overset{\eqref{Skew-S}}{\equiv} \theta(x) - \theta(\tilde{x}^k)  +  (w - \tilde{w}^k)^T F(\tilde{w}^k) \ge
      (\xi  -\tilde{\xi}^k)^T\mathcal{Q}(\xi^k-\tilde{\xi}^k).
\end{equation}
Let us first refine the right-hand side of \eqref{M-PRE-1} as
\[ \label{LEM-M-a}  (\xi-\tilde{\xi}^{k})^T\mathcal{Q}(\xi^k  -\tilde{\xi}^{k})=   \frac{1}{2\alpha}\bigl( \|\xi- \xi^{k+1}\|_\mathcal{H}^2- \|\xi- \xi^{k}\|_\mathcal{H}^2\bigr)
       +  \frac{1}{2}(2-\alpha)  \|{\xi}^k - \tilde{\xi}^k  \|_\mathcal{D}^2.  \]
To this end, according to \eqref{Matrix-GQ}  and  \eqref{CorrectionS}, we have
\begin{equation}\label{LEM-M-b}
(\xi-\tilde{\xi}^{k})^T \mathcal{Q}(\xi^k  -\tilde{\xi}^{k}) \overset{\eqref{Matrix-GQ}}{=} (\xi-\tilde{\xi}^{k})^T\mathcal{H}\mathcal{M}(\xi^k  -\tilde{\xi}^{k}) \overset{\eqref{CorrectionS}}{=}  \frac{1}{\alpha}(\xi -\tilde{\xi}^{k})^T\mathcal{H}(\xi^k  -{\xi}^{k+1}).
\end{equation}
Using the identity
$$(a-b)^T\mathcal{H}(c-d) = \frac{1}{2} \big\{\|a-d\|_\mathcal{H}^2 -\|a-c\|_\mathcal{H}^2 \big\} + \frac{1}{2} \big\{\|c-b\|_\mathcal{H}^2 -\|d-b\|_\mathcal{H}^2 \big\}$$
with $ a=\xi$, $b=\tilde{\xi}^k$, $c=\xi^k$ and $d=\xi^{k+1}$, we get
\begin{equation}\label{LEM-M-c}
  (\xi-\tilde{\xi}^k)^T\mathcal{H}(\xi^k-\xi^{k+1}) = \frac{1}{2} \big\{ \|\xi-\xi^{k+1}\|_\mathcal{H}^2  -  \|\xi-\xi^k \|_\mathcal{H}^2 \big\} + \frac{1}{2}\big\{\|\xi^k -\tilde{\xi}^k\|_\mathcal{H}^2  -  \|\xi^{k+1}-\tilde{\xi}^k\|_\mathcal{H}^2\big\}.
\end{equation}
For the second term of the right-hand side of \eqref{LEM-M-c}, we have
\begin{eqnarray}  \label{LEM-M-d}
  \frac{1}{2}\big\{ \|{\xi}^k -\tilde{\xi}^k \|_\mathcal{H}^2  -   \|{\xi}^{k+1} -\tilde{\xi}^k\|_\mathcal{H}^2\bigr\}
     &\overset{\eqref{CorrectionS}}{=} &  \frac{1}{2}\big\{ \|{\xi}^k -\tilde{\xi}^k\|_\mathcal{H}^2  -   \|({\xi}^{k} -\tilde{\xi}^k) -\alpha\mathcal{M}(\xi^k - \tilde{\xi}^k)  \|_\mathcal{H}^2 \bigr\} \nn \\
 & \overset{\eqref{Matrix-GQ}}{=}& \frac{1}{2}\big\{  2 \alpha({\xi}^{k}- \tilde{\xi}^k) ^T\mathcal{Q}(\xi^k - \tilde{\xi}^k)
       -\alpha^2\|\mathcal{M}(\xi^k - \tilde{\xi}^k)  \|_{\mathcal{H}}^2  \bigr\} \nn \\
       & \overset{\eqref{Matrix-GQ}}{=} & \frac{1}{2}(\xi^k - \tilde{\xi}^k)^T[\alpha(\mathcal{Q}^T+\mathcal{Q})-\alpha^2\mathcal{D}]  (\xi^k - \tilde{\xi}^k) \nn  \\
  &  \overset{\eqref{PQP}}{=} &  \frac{1}{2}\alpha(2-\alpha) \|{\xi}^k - \tilde{\xi}^k  \|_\mathcal{D}^2.
  \end{eqnarray}
Combining \eqref{LEM-M-b}, \eqref{LEM-M-c} and \eqref{LEM-M-d}, we obtain the equality \eqref{LEM-M-a}. Then, the first assertion \eqref{HauptA1} follows immediately by substituting \eqref{LEM-M-a} into \eqref{M-PRE-1}. Furthermore, setting  $w$ in \eqref{HauptA1} as any fixed $w^*\in \Omega^*$,  we get
\begin{equation}\label{HauptA2}
\|\xi^{k}-\xi^*\|_{\mathcal{H}}^2 -  \|\xi^{k+1}-\xi^*\|_{\mathcal{H}}^2 \ge 2\alpha\{\theta(\tilde{x}^k) -\theta(x^*)  +  (\tilde{w}^k-w^*)^T F(w^\ast)\} +\alpha(2-\alpha) \|{\xi}^k - \tilde{\xi}^k  \|_{\mathcal{D}}^2.
\end{equation}
Also, it follows from $w^*\in \Omega^*$ and \eqref{OVI} that
$$
\theta(\tilde{x}^k)-\theta(x^*) +   (\tilde{w}^k-w^*)^T F(w^*) \ge 0.
$$
This leads to the second assertion \eqref{XIcontr} immediately, and the proof is complete.
\end{proof}
\begin{remark}\label{ascent}
Setting $w$ as an arbitrary $w^\ast\in\Omega^\ast$ in \eqref{M-PRE-1}, we have
\begin{equation}\label{Ineq-A2}
(\tilde{\xi}^k -\xi^*)^T \mathcal{Q} (\xi^k-\tilde{\xi}^k) \geq \theta(\tilde{x}^k) -\theta(x^*)  + (\tilde{w}^k - w^*)^T F(w^*) \overset{\eqref{OVI}}{\ge}  0.
\end{equation}
Applying the equalities $\tilde{\xi}^k -\xi^*=\xi^k-\xi^\ast-(\xi^k-\tilde{\xi}^k)$ and $2\xi^T\mathcal{Q}\xi=\xi^T(\mathcal{Q}^T+\mathcal{Q})\xi$ to \eqref{Ineq-A2}, we get
\begin{equation}\label{Ineq-C}
  ({\xi}^k -\xi^*)^T \mathcal{Q} (\xi^k-\tilde{\xi}^k) \ge (\xi^k-\tilde{\xi}^k)^T\mathcal{Q}(\xi^k-\tilde{\xi}^k) = \frac{1}{2}(\xi^k-\tilde{\xi}^k)^T(\mathcal{Q}^T+\mathcal{Q})(\xi^k-\tilde{\xi}^k) \overset{\eqref{PQP}}{=}\|\xi^k -\tilde{\xi}^k\|_\mathcal{D}^2.
\end{equation}
Then, with the matrices $\mathcal{H}$ and $\mathcal{M}$ in \eqref{Matrix-M}, we obtain
\begin{eqnarray*}
 \lefteqn{\Big\langle\nabla\big(\frac{1}{2}\|\xi -\xi^*\|_\mathcal{H}^2\big)\big|_{\xi=\xi^k},-\mathcal{M} (\xi^k-\tilde{\xi}^k)\Big\rangle} \\
  &=& -({\xi}^k -\xi^*)^T \mathcal{H}\mathcal{M} (\xi^k-\tilde{\xi}^k) \overset{\eqref{Matrix-GQ}}{=}-({\xi}^k -\xi^*)^T \mathcal{Q} (\xi^k-\tilde{\xi}^k) \overset{\eqref{Ineq-C}}{\leq} -\|\xi^k -\tilde{\xi}^k\|_\mathcal{D}^2.
\end{eqnarray*}
This indicates that $d(\xi^k,\tilde{\xi}^k):=-\mathcal{M}(\xi^k - \tilde{\xi}^k)$ can be regarded as a descent direction along which the proximity to the solution set can be reduced.
\end{remark}

Now we are ready to show the global convergence of the rank-two relaxed parallel splitting version of the ALM  \eqref{Rank2M}.

\begin{theorem}\label{convergence}
The sequence $\{\xi^k\}$ generated by the proposed new algorithm \eqref{Rank2M} for \eqref{A-Problem-M} converges to some $\xi^\infty\in\Xi^\ast$,  where $\xi$ and $\Xi^\ast$ are defined in \eqref{Xi-notation} and \eqref{Xi-notation1}, respectively.
\end{theorem}
\begin{proof}
First of all, it follows from the inequality \eqref{XIcontr} that the sequence $\{\xi^k\}$ is bounded. Adding \eqref{XIcontr} over $k=0,\ldots,\infty$, we have
$$\sum_{k=0}^\infty\|\xi^k - \tilde{\xi}^k\|_\mathcal{D}^2\leq\frac{1}{\alpha(2-\alpha)}\| \xi^0 - \xi^*  \|_{\mathcal{H}}^2.$$
Considering the monotone convergence principle for the sequence $\{s_k:=\sum_{j=0}^k\|\xi^j - \tilde{\xi}^j\|_\mathcal{D}^2\}_{k\geq1}$, we obtain
\begin{equation}\label{limtbound}
  \lim_{k\rightarrow\infty}\|\xi^k - \tilde{\xi}^k\|_\mathcal{D}^2=0.
\end{equation}
The sequence $\{\tilde{\xi}^k\}$ is thus also bounded. Since $A_i$'s are assumed to be full column-rank in \eqref{A-Problem-M}, the sequence $\{\tilde{w}^k\}$ is bounded. Let $w^\infty$ be a cluster point of $\{\tilde{w}^k\}$ and $\{\tilde{w}^{k_j}\}$ be a subsequence converging to $w^\infty$. Recall that $\{\tilde{\xi}^k\}$ and $\{\tilde{\xi}^{k_j}\}$ are associated with $\{\tilde{w}^k\}$ and $\{\tilde{w}^{k_j}\}$, respectively. Then, it follows from \eqref{Prediction-w} that
$$\theta(x)-\theta(\tilde{x}^{k_j})+(w-\tilde{w}^{k_j})^TF(\tilde{w}^{k_j})\geq(\xi-\tilde{\xi}^{k_j})^T\mathcal{Q}(\xi^{k_j}-\tilde{\xi}^{k_j}),\quad \forall \; w\in\Omega.$$
According to \eqref{limtbound} and the continuity of $\theta$ and $F$, we have
$$w^\infty\in\Omega,\quad \theta(x) -\theta(x^\infty) +(w-w^{\infty})^T F(w^{\infty}) \ge 0, \quad \forall \;  w\in\Omega.$$
This means that $w^\infty$ is a solution point of the VI \eqref{VI1}, and hence $\xi^\infty:=Pw^\infty\in\Xi^\ast$. Furthermore, according to \eqref{XIcontr}, we have
\begin{equation}\label{III}
\begin{aligned}
  \|\xi^{k+1}-\xi^\infty\|_{\mathcal{H}}^2\leq\|\xi^k-\xi^\infty\|_{\mathcal{H}}^2.
  \end{aligned}
\end{equation}
Thus, the sequence $\{\|\xi^k - \xi^{\infty}\|_{\mathcal{H}}^2\}_{k \geq 0}$ is nonincreasing, and it is bounded away below from zero. Also, it follows from $\lim_{j\rightarrow\infty}\tilde{\xi}^{k_j}=\xi^\infty$ and \eqref{limtbound} that $\lim_{j\rightarrow\infty}\xi^{k_j}=\xi^\infty$. Therefore, we have $\lim_{k\rightarrow\infty}\xi^k=\xi^\infty\in\Xi^\ast$. The proof is complete.
\end{proof}

\begin{remark}
It follows from \eqref{Pall-XL} that $\sum_{i=1}^pA_ix_i^k-b=\frac{1}{\beta}(\lambda^k-\tilde{\lambda}^k)$. Hence, \eqref{limtbound} implies that the residual of the equality constraints in \eqref{A-Problem-M}, i.e., $\|\sum_{i=1}^pA_ix_i^k-b\|$, converges to 0 when $k\to \infty$.
\end{remark}

\section{Convergence rate}\label{Sec-rate}

\setcounter{equation}{0}
\setcounter{remark}{0}
\setcounter{proposition}{0}

In this section, we derive the worst-case $O(1/N)$ convergence rate in both the ergodic and point-wise  (a.k.a., nonergodic) senses for the rank-two relaxed parallel splitting version of the ALM  \eqref{Rank2M}, where $N$ denotes the iteration counter. We mainly follow the techniques in previous works \cite{Beck,he2017convergence,Heuniform2021,HY-SINUM,HY-NM} to derive the convergence rate.

\subsection{Ergodic convergence rate}

We first derive the worst-case $O(1/N)$ convergence rate in the ergodic sense for the new algorithm \eqref{Rank2M} in terms of the reduction of the objective function value and the residual of the equality constraints of the model \eqref{A-Problem-M}.

\begin{theorem}\label{erogic}
Let $\{\xi^k\}$ be the sequence generated by the proposed new algorithm \eqref{Rank2M} for \eqref{A-Problem-M} and $\tilde{w}^k$ be the output of the parallel splitting ALM step \eqref{Pall-XL},  and let
\begin{equation}\label{averagep}
{\bar{w}_{\!_N}} := \frac{1}{N+1} \sum_{k=0}^N \tilde{w}^k
\end{equation}
for any integer $N>0$. Suppose that $(x^\ast,\lambda^\ast)\in\Omega^\ast$ is a saddle point of the Lagrangian function \eqref{lagra}. Then, for any $\alpha\in(0,2)$, it holds that
\begin{equation}\label{ergodic-rate}
\theta(\bar{x}_{\!_N}) -\theta(x^*) \leq  O(1/N) \quad \hbox{and} \quad \|\mathcal{A}\bar{x}_{\!_N} -b\|  \leq  O(1/N).
\end{equation}
\end{theorem}
\begin{proof}
Recall that the matrix $\mathcal{D}$ defined in \eqref{Matrix-QS} is positive definite. It follows from \eqref{HauptA1} that
\begin{equation}\label{crate1}
\tilde{w}^k \in \Omega, \;\; \theta(\tilde{x}^k) -  \theta(x) + (\tilde{w}^k-w)^TF(w) \leq  \frac{1}{2\alpha}\big\{\|\xi-\xi^k \|_\mathcal{H}^2 - \|\xi-\xi^{k+1}\|_\mathcal{H}^2 \big\}, \quad  \forall \; w\in \Omega.
\end{equation}
Adding \eqref{crate1} over $k=0,1,\ldots,N$, we get
$$
  \sum_{k=0}^N\theta(\tilde{x}^k)  - (N+1)\theta(x) + \Big(\sum_{k=0}^N\tilde{w}^k- (N+1)w\Big)^TF(w) \leq  \frac{1}{2\alpha}\|\xi-\xi^0 \|_\mathcal{H}^2, \quad  \forall \; w\in \Omega.
$$
With the notation ${\bar{w}_{\!_N}}$ defined in \eqref{averagep}, the above inequality can be rewritten as
\begin{equation}\label{crate2}
  \frac{1}{N+1}\sum_{k=0}^N\theta(\tilde{x}^k)  - \theta(x) + ({\bar{w}_{\!_N}}- w)^TF(w) \leq  \frac{1}{2\alpha(N+1)}\|\xi-\xi^0 \|_\mathcal{H}^2, \quad  \forall \; w\in \Omega.
\end{equation}
Since $\Omega$ is a convex set and $\tilde{w}^k\in\Omega$ for all $k\geq0$, we have ${\bar{w}_{\!_N}}\in\Omega$.  On the other hand,  it follows from the convexity of $\theta$ that
\begin{equation}\label{crate3}
   \theta({\bar{x}_{\!_N}})=\theta\Bigl(\frac{1}{N+1} \sum_{k=0}^N \tilde{x}^k\Bigr)\leq\frac{1}{N+1}\sum_{k=0}^N\theta(\tilde{x}^k).
\end{equation}
Substituting \eqref{crate3} into \eqref{crate2}, we obtain
$$
{\bar{w}_{\!_N}}\in\Omega,\quad  \theta(\bar{x}_{\!_N}) -\theta(x)  + (\bar{w}_{\!_N}-w)^T F(w) \le \frac{1}{2\alpha(N+1)}\|\xi-\xi^0\|_\mathcal{H}^2, \quad \forall \;w\in\Omega.
$$
Because of the skew-symmetry of $F$ (see \eqref{Skew-S}), the above inequality is equivalent to
\begin{equation}\label{crate4}
{\bar{w}_{\!_N}}\in\Omega,\quad  \theta(\bar{x}_{\!_N}) -\theta(x)  + (\bar{w}_{\!_N}-w)^T F(\bar{w}_{\!_N}) \le \frac{1}{2\alpha(N+1)}\|\xi-\xi^0\|_\mathcal{H}^2, \quad \forall \;w\in\Omega.
\end{equation}
Plugging $x=x^\ast$ into \eqref{crate4} and using the notations defined in \eqref{Matrix-H-T2} and \eqref{averagep}, we obtain
\begin{eqnarray*}
&&\theta(\bar{x}_{\!_N}) -\theta(x^*)-(\bar{x}_{\!_N}-x^*)^T\mathcal{A}^T \bar{\lambda}_{\!_N}+(\bar{\lambda}_{\!_N}-\lambda)^T(\mathcal{A}\bar{x}_{\!_N} -b)  \\[0.1cm]
&&\quad \leq \frac{1}{2\alpha(N+1)}\Big\{\|{\hat{\xi}}^*-\hat{\xi}^0\|_{\hat{\mathcal{H}}}^2+\frac{1}{\beta}(1+p)\|\lambda-\lambda^0\|^2\Big\}, \quad \forall \; \lambda \in \Re^m.
\end{eqnarray*}
Moreover, it follows from $\mathcal{A}x^*-b=0$ that
$$  \theta(\bar{x}_{\!_N}) -\theta(x^*)-\lambda^T(\mathcal{A}\bar{x}_{\!_N} -b) \leq \frac{1}{2\alpha(N+1)}\Big\{\|\hat{\xi}^*-\hat{\xi}^0\|_{\hat{\mathcal{H}}}^2+\frac{1}{\beta}(1+p)\|\lambda-\lambda^0\|^2\Big\}, \quad \forall \; \lambda \in \Re^m.$$
Since it holds that
$$
\|\lambda-\lambda^0\|^2\leq(\|\lambda\|+\|\lambda^0\|)^2,\quad \forall \; \lambda \in \Re^m, \; \lambda_0 \in \Re^m,
$$
we further obtain
\begin{equation}\label{crate5}
 \theta(\bar{x}_{\!_N}) -\theta(x^*)-\lambda^T(\mathcal{A}\bar{x}_{\!_N} -b) \leq \frac{1}{2\alpha(N+1)}\Big\{\|\hat{\xi}^*-\hat{\xi}^0\|_{\hat{\mathcal{H}}}^2+\frac{1}{\beta}(1+p)(\|\lambda\|+\|\lambda^0\|)^2\Big\}, \quad \forall \; \lambda \in \Re^m.
\end{equation}
Without loss of generality, suppose $\mathcal{A}\bar{x}_{\!_N} -b\neq0$. Taking $\lambda=-(\mathcal{A}\bar{x}_{\!_N} -b)/\|\mathcal{A}\bar{x}_{\!_N} -b\|$ in \eqref{crate5}, we have
$$
\theta(\bar{x}_{\!_N}) -\theta(x^*)+\|\mathcal{A}\bar{x}_{\!_N} -b\| \leq \frac{1}{2\alpha(N+1)}\Big\{ \|\hat{\xi}^*-\hat{\xi}^0\|_{\hat{\mathcal{H}}}^2+\frac{1}{\beta}(1+p)(1+\|\lambda^0\|)^2\Big\},
$$
which implies the assertion (\ref{ergodic-rate}) immediately.
\end{proof}

\subsection{Point-wise convergence rate}

Now, we establish a worst-case $O(1/N)$ convergence rate for the rank-two relaxed parallel splitting version of the ALM \eqref{Rank2M} in the point-wise sense. We start from a theorem indicating certain monotonicity of the sequence $\{\|\xi^k-{\xi}^{k+1}\|_\mathcal{H}^2$\}.

\begin{theorem}
Let $\{\xi^k\}$ be the sequence generated by the proposed new algorithm \eqref{Rank2M} for \eqref{A-Problem-M}, and $\mathcal{H}$ be the matrix defined in \eqref{Matrix-M}. Then, for any integer $k\geq0$ and $\alpha\in(0,2)$, we have
\begin{equation}\label{point-2}
\|\xi^{k+1}-{\xi}^{k+2}\|_\mathcal{H}^2\leq \|\xi^k-{\xi}^{k+1}\|_\mathcal{H}^2-\alpha(2-\alpha) \|(\xi^k-\tilde{\xi}^k)-(\xi^{k+1}-\tilde{\xi}^{k+1})\|_\mathcal{D}^2.
\end{equation}
\end{theorem}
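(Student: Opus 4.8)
The plan is to reduce the claimed monotonicity to a statement about the prediction gaps $\xi^k-\tilde{\xi}^k$. First I would observe, using the correction step \eqref{CorrectionS} together with the identity $\mathcal{M}^T\mathcal{H}\mathcal{M}=\mathcal{D}$ from \eqref{Matrix-GQ}, that $\|\xi^k-\xi^{k+1}\|_\mathcal{H}^2=\alpha^2(\xi^k-\tilde{\xi}^k)^T\mathcal{M}^T\mathcal{H}\mathcal{M}(\xi^k-\tilde{\xi}^k)=\alpha^2\|\xi^k-\tilde{\xi}^k\|_\mathcal{D}^2$, and likewise $\|\xi^{k+1}-\xi^{k+2}\|_\mathcal{H}^2=\alpha^2\|\xi^{k+1}-\tilde{\xi}^{k+1}\|_\mathcal{D}^2$. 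Hence \eqref{point-2} is equivalent to $\|\xi^{k+1}-\tilde{\xi}^{k+1}\|_\mathcal{D}^2\le\|\xi^k-\tilde{\xi}^k\|_\mathcal{D}^2$, i.e. the prediction gaps are nonincreasing in the $\mathcal{D}$-norm.

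The core of the argument is to produce a useful cross-inequality from Lemma \ref{VI-Lemma}. I would write \eqref{Prediction-w} at iteration $k$ and evaluate it at $w=\tilde{w}^{k+1}$ (so that $\xi=\tilde{\xi}^{k+1}$), and then write \eqref{Prediction-w} at iteration $k+1$ and evaluate it at $w=\tilde{w}^k$ (so that $\xi=\tilde{\xi}^k$); both substitutions are admissible because $\tilde{w}^k,\tilde{w}^{k+1}\in\Omega$. Adding the two resulting inequalities, the objective differences cancel exactly, and the two $F$-terms combine into $(\tilde{w}^{k+1}-\tilde{w}^k)^T[F(\tilde{w}^k)-F(\tilde{w}^{k+1})]$, which vanishes by the monotonicity (indeed skew-symmetry) \eqref{Skew-S}. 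What survives is the clean relation $(\tilde{\xi}^k-\tilde{\xi}^{k+1})^T\mathcal{Q}\big[(\xi^k-\tilde{\xi}^k)-(\xi^{k+1}-\tilde{\xi}^{k+1})\big]\ge0$. This cancellation step, i.e. choosing the substitutions so that $\theta$ and $F$ drop out, is the main obstacle; everything afterwards is linear algebra.

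Writing $d^k:=\xi^k-\tilde{\xi}^k$ and $v:=d^k-d^{k+1}$, I would next express $\tilde{\xi}^k-\tilde{\xi}^{k+1}=(\xi^k-\xi^{k+1})-v=\alpha\mathcal{M}d^k-v$ via \eqref{CorrectionS}, substitute into the surviving inequality, and use $\mathcal{M}^T\mathcal{Q}=\mathcal{D}$ (a consequence of \eqref{Matrix-GQ}) together with $\mathcal{Q}^T+\mathcal{Q}=2\mathcal{D}$ from \eqref{PQP} to reduce it to $\alpha(d^k)^T\mathcal{D}v\ge\|v\|_\mathcal{D}^2$. Finally I would expand $\|d^{k+1}\|_\mathcal{D}^2=\|d^k-v\|_\mathcal{D}^2=\|d^k\|_\mathcal{D}^2-2(d^k)^T\mathcal{D}v+\|v\|_\mathcal{D}^2$ and insert the bound $(d^k)^T\mathcal{D}v\ge\tfrac{1}{\alpha}\|v\|_\mathcal{D}^2$ to get $\|d^{k+1}\|_\mathcal{D}^2-\|d^k\|_\mathcal{D}^2\le(1-\tfrac{2}{\alpha})\|v\|_\mathcal{D}^2$. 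Since $\alpha\in(0,2)$ forces $1-\tfrac{2}{\alpha}<0$, the right-hand side is nonpositive, which yields $\|d^{k+1}\|_\mathcal{D}^2\le\|d^k\|_\mathcal{D}^2$ and hence \eqref{point-2} after multiplying by $\alpha^2$.
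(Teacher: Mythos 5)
Your proposal is correct and takes essentially the same route as the paper: the heart of both arguments is the cross-inequality obtained by applying Lemma \ref{VI-Lemma} at iterations $k$ and $k+1$ with the test points swapped ($w=\tilde{w}^{k+1}$ and $w=\tilde{w}^k$ respectively), adding, and invoking the skew-symmetry \eqref{Skew-S}, followed by the matrix identities \eqref{PQP} and \eqref{Matrix-GQ}. The only difference is bookkeeping: you first reduce \eqref{point-2} to $\mathcal{D}$-norm monotonicity of the prediction gaps via $\|\xi^k-\xi^{k+1}\|_\mathcal{H}^2=\alpha^2\|\xi^k-\tilde{\xi}^k\|_\mathcal{D}^2$, whereas the paper manipulates the $\mathcal{H}$-norm identity for $\xi^k-\xi^{k+1}$ directly; both yield the identical quantitative decrement $\alpha(2-\alpha)\|(\xi^k-\tilde{\xi}^k)-(\xi^{k+1}-\tilde{\xi}^{k+1})\|_\mathcal{D}^2$.
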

\begin{proof}
Utilizing the identity $\|a\|_\mathcal{H}^2-\|b\|_\mathcal{H}^2=2a^T\mathcal{H}(a-b)-\|a-b\|_\mathcal{H}^2$ with $a=(\xi^k-{\xi}^{k+1})$ and $b=(\xi^{k+1}-{\xi}^{k+2})$, we obtain
\begin{eqnarray}\label{Bequaliy}
\|\xi^k-{\xi}^{k+1}\|_\mathcal{H}^2  - \|\xi^{k+1}-{\xi}^{k+2}\|_\mathcal{H}^2
  &=& 2(\xi^k-{\xi}^{k+1})^T\mathcal{H}\{(\xi^k-{\xi}^{k+1})-(\xi^{k+1}-{\xi}^{k+2})\} \nn \\
   & &  -\|(\xi^k-{\xi}^{k+1})-(\xi^{k+1}-{\xi}^{k+2})\|_\mathcal{H}^2.
\end{eqnarray}
Let us first bound the first term in the right-hand side of \eqref{Bequaliy} by a quadratic term. To this end, setting $w=\tilde{w}^{k+1}$ in \eqref{Prediction-w}, we have
\begin{equation}\label{CR1}
  \theta(\tilde{x}^{k+1})-\theta(\tilde{x}^k)+(\tilde{w}^{k+1}-\tilde{w}^k)^TF(\tilde{w}^k)\geq(\tilde{\xi}^{k+1}-\tilde{\xi}^k)^T\mathcal{Q}(\xi^k-\tilde{\xi}^k).
\end{equation}
Also, rewriting the inequality \eqref{Prediction-w} for the $(k+1)$-th iteration leads to
\begin{equation}\label{CR2}
  \theta(x)-\theta(\tilde{x}^{k+1})+(w-\tilde{w}^{k+1})^TF(\tilde{w}^{k+1})\geq(\xi-\tilde{\xi}^{k+1})^T\mathcal{Q}(\xi^{k+1}-\tilde{\xi}^{k+1}),\quad \forall \; w\in\Omega.
\end{equation}
Setting $w=\tilde{w}^k$ in \eqref{CR2}, we obtain
\begin{equation}\label{CR3}
  \theta(\tilde{x}^{k})-\theta(\tilde{x}^{k+1})+(\tilde{w}^{k}-\tilde{w}^{k+1})^TF(\tilde{w}^{k+1})\geq(\tilde{\xi}^{k}-\tilde{\xi}^{k+1})^T\mathcal{Q}(\xi^{k+1}-\tilde{\xi}^{k+1}).
\end{equation}
Adding \eqref{CR1} and \eqref{CR3},  and combining with the monotonicity of $F$  (see \eqref{Skew-S}), we have
\begin{equation}\label{noneg1}
(\tilde{\xi}^{k}-\tilde{\xi}^{k+1})^T\mathcal{Q}\{(\xi^k-\tilde{\xi}^k)-(\xi^{k+1}-\tilde{\xi}^{k+1})\}\geq(\tilde{w}^{k}-\tilde{w}^{k+1})^T(F(\tilde{w}^k)-F(\tilde{w}^{k+1}))\overset{\eqref{Skew-S}}{\equiv}0.
\end{equation}
Moreover, adding the term $\{(\xi^k-\tilde{\xi}^k)-(\xi^{k+1}-\tilde{\xi}^{k+1})\}^T\mathcal{Q}\{(\xi^k-\tilde{\xi}^k)-(\xi^{k+1}-\tilde{\xi}^{k+1})\}$ to both sides of \eqref{noneg1} and using $2\xi^T\mathcal{Q}\xi=\xi^T(\mathcal{Q}^T+\mathcal{Q})\xi\overset{\eqref{PQP}}{=}2\xi^T\mathcal{D}\xi$, we get
\begin{equation}\label{noneg2}
(\xi^k-\xi^{k+1})^T\mathcal{Q}\bigl\{(\xi^k-\tilde{\xi}^k)-(\xi^{k+1}-\tilde{\xi}^{k+1})\bigr\}\geq\|(\xi^k-\tilde{\xi}^k)-(\xi^{k+1}-\tilde{\xi}^{k+1})\|_{\mathcal{D}}^2.
\end{equation}
Meanwhile, note that the left-hand side of \eqref{noneg2} can be rewritten as
\begin{eqnarray*}
(\xi^k-\xi^{k+1})^T\mathcal{Q}\{(\xi^k-\tilde{\xi}^k)-(\xi^{k+1}-\tilde{\xi}^{k+1})\}&\overset{\eqref{Matrix-GQ}}{=}&(\xi^k-\xi^{k+1})^T\mathcal{H}\mathcal{M}\{(\xi^k-\tilde{\xi}^k)-(\xi^{k+1}-\tilde{\xi}^{k+1})\} \\
   &\overset{\eqref{CorrectionS}}{=}& \frac{1}{\alpha}(\xi^k-\xi^{k+1})^T\mathcal{H}\{(\xi^k-{\xi}^{k+1})-(\xi^{k+1}-{\xi}^{k+2})\}.
\end{eqnarray*}
We thus obtain
\begin{equation}\label{point-1}
  2(\xi^k-\xi^{k+1})^T\mathcal{H}\{(\xi^k-{\xi}^{k+1})-(\xi^{k+1}-{\xi}^{k+2})\}\geq 2\alpha\|(\xi^k-\tilde{\xi}^k)-(\xi^{k+1}-\tilde{\xi}^{k+1})\|_{\mathcal{D}}^2.
\end{equation}
Furthermore, substituting \eqref{point-1} into \eqref{Bequaliy}, we have
\begin{eqnarray*}
  \lefteqn{ \|\xi^k-{\xi}^{k+1}\|_\mathcal{H}^2  - \|\xi^{k+1}-{\xi}^{k+2}\|_\mathcal{H}^2} \\
   &=&  2(\xi^k-{\xi}^{k+1})^T\mathcal{H}\bigl\{(\xi^k-{\xi}^{k+1})-(\xi^{k+1}-{\xi}^{k+2})\bigr\}  -\|(\xi^k-{\xi}^{k+1})-(\xi^{k+1}-{\xi}^{k+2})\|_\mathcal{H}^2 \\
   &\overset{\eqref{point-1}}{\geq}& 2\alpha\|(\xi^k-\tilde{\xi}^k)-(\xi^{k+1}-\tilde{\xi}^{k+1})\|_{\mathcal{D}}^2 -\|(\xi^k-{\xi}^{k+1})-(\xi^{k+1}-{\xi}^{k+2})\|_\mathcal{H}^2 \\
   &\overset{\eqref{CorrectionS}}{=}& 2\alpha\|(\xi^k-\tilde{\xi}^k)-(\xi^{k+1}-\tilde{\xi}^{k+1})\|_{\mathcal{D}}^2-\alpha^2\|\mathcal{M}\{(\xi^k-\tilde{\xi}^k)-(\xi^{k+1}-\tilde{\xi}^{k+1})\}\|_\mathcal{H}^2 \\
   &\overset{\eqref{Matrix-GQ}}{=}& \alpha(2-\alpha) \|(\xi^k-\tilde{\xi}^k)-(\xi^{k+1}-\tilde{\xi}^{k+1})\|_\mathcal{D}^2,
\end{eqnarray*}
and the proof is complete.
\end{proof}

Then, a worst-case $O(1/N)$ convergence rate for the proposed new algorithm \eqref{Rank2M} in the point-wise sense can be proved.
\begin{theorem}\label{last-theorem}
Let $\xi$ and $\Xi^\ast$ be defined in \eqref{Xi-notation} and \eqref{Xi-notation1}, respectively, and let $\{\xi^k\}$ be the sequence generated by the proposed new algorithm \eqref{Rank2M} for \eqref{A-Problem-M}, and $\mathcal{H}$ be the matrix defined in \eqref{Matrix-M}. Then, for any integer $N>0$ and $\alpha\in(0,2)$,  we have
\begin{equation}\label{keyin1}
\|\xi^N- {\xi}^{N+1}\|_\mathcal{H}^2\leq\frac{\alpha}{(2-\alpha)(N+1)}\|\xi^0-\xi^\ast\|_\mathcal{H}^2, \quad  \forall  \; \xi^* \in \Xi^*.
\end{equation}
\end{theorem}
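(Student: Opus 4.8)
The plan is to combine the contraction inequality \eqref{XIcontr} with the monotonicity result \eqref{point-2} through a telescoping-and-summation argument. The bridge between the two is the matrix identity $\mathcal{M}^T\mathcal{H}\mathcal{M}=\mathcal{D}$ from \eqref{Matrix-GQ}, which lets me re-express the $\mathcal{D}$-norm appearing in \eqref{XIcontr} as an $\mathcal{H}$-norm of consecutive iterates. Indeed, the correction step \eqref{CorrectionS} gives $\xi^k-\xi^{k+1}=\alpha\mathcal{M}(\xi^k-\tilde{\xi}^k)$, so that
\[
\|\xi^k-\xi^{k+1}\|_\mathcal{H}^2=\alpha^2(\xi^k-\tilde{\xi}^k)^T\mathcal{M}^T\mathcal{H}\mathcal{M}(\xi^k-\tilde{\xi}^k)=\alpha^2\|\xi^k-\tilde{\xi}^k\|_\mathcal{D}^2,
\]
where the last equality uses \eqref{Matrix-GQ}. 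Hence $\|\xi^k-\tilde{\xi}^k\|_\mathcal{D}^2=\frac{1}{\alpha^2}\|\xi^k-\xi^{k+1}\|_\mathcal{H}^2$.

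Next I would substitute this identity into the contraction estimate \eqref{XIcontr}, which converts it into a telescoping-ready form:
\[
\|\xi^{k+1}-\xi^*\|_\mathcal{H}^2\le\|\xi^k-\xi^*\|_\mathcal{H}^2-\frac{2-\alpha}{\alpha}\|\xi^k-\xi^{k+1}\|_\mathcal{H}^2,\qquad\forall\,\xi^*\in\Xi^*.
\]
Summing this inequality over $k=0,1,\ldots,N$ and telescoping the right-hand side yields
\[
\frac{2-\alpha}{\alpha}\sum_{k=0}^N\|\xi^k-\xi^{k+1}\|_\mathcal{H}^2\le\|\xi^0-\xi^*\|_\mathcal{H}^2-\|\xi^{N+1}-\xi^*\|_\mathcal{H}^2\le\|\xi^0-\xi^*\|_\mathcal{H}^2,
\]
where the last bound simply discards the nonnegative term $\|\xi^{N+1}-\xi^*\|_\mathcal{H}^2$. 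Since $\alpha\in(0,2)$, the factor $\frac{2-\alpha}{\alpha}$ is strictly positive, so the sum is controlled by $\frac{\alpha}{2-\alpha}\|\xi^0-\xi^*\|_\mathcal{H}^2$.

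The final step invokes the monotonicity established in \eqref{point-2}: the sequence $\{\|\xi^k-\xi^{k+1}\|_\mathcal{H}^2\}$ is nonincreasing in $k$, so its last term $\|\xi^N-\xi^{N+1}\|_\mathcal{H}^2$ is the smallest among the $N+1$ summands. Therefore
\[
(N+1)\|\xi^N-\xi^{N+1}\|_\mathcal{H}^2\le\sum_{k=0}^N\|\xi^k-\xi^{k+1}\|_\mathcal{H}^2\le\frac{\alpha}{2-\alpha}\|\xi^0-\xi^*\|_\mathcal{H}^2,
\]
and dividing by $N+1$ gives exactly \eqref{keyin1}. I do not expect any genuine obstacle here, as both main ingredients \eqref{XIcontr} and \eqref{point-2} have already been proved; the only point requiring a little care is the norm conversion via $\mathcal{M}^T\mathcal{H}\mathcal{M}=\mathcal{D}$, which is what makes the contraction inequality's $\|\xi^k-\tilde{\xi}^k\|_\mathcal{D}^2$ term line up with the quantity $\|\xi^N-\xi^{N+1}\|_\mathcal{H}^2$ being bounded.
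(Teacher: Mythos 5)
Your proposal is correct and follows essentially the same route as the paper's proof: converting the $\mathcal{D}$-norm term in \eqref{XIcontr} to $\frac{2-\alpha}{\alpha}\|\xi^k-\xi^{k+1}\|_\mathcal{H}^2$ via \eqref{CorrectionS} and $\mathcal{M}^T\mathcal{H}\mathcal{M}=\mathcal{D}$, summing and telescoping over $k=0,\ldots,N$, and then invoking the monotonicity \eqref{point-2} to bound the last term by the average. The only cosmetic difference is the direction in which you state the norm identity; the ingredients and logic are identical.
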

\begin{proof}
According to $\mathcal{D}=\mathcal{M}^T\mathcal{H}\mathcal{M}$ (see \eqref{Matrix-GQ}) and \eqref{CorrectionS}, we can rewrite \eqref{XIcontr} as
\begin{eqnarray}\label{THM-H-C-01}
 \|\xi^{k+1} - \xi^*\|_{\mathcal{H}}^2 &\le&   \| \xi^k - \xi^*  \|_{\mathcal{H}}^2   - \alpha(2-\alpha)  \|\xi^k - \tilde{\xi}^k\|_\mathcal{D}^2  \nonumber\\
 &\overset{\eqref{Matrix-GQ}}{=}& \|\xi^k  - \xi^*\|_\mathcal{H}^2 - \alpha(2-\alpha)\|\mathcal{M}(\xi^k -\tilde{\xi}^k)\|_\mathcal{H}^2 \nonumber \\
 &\overset{\eqref{CorrectionS}}{=}& \|\xi^k  - \xi^*\|_\mathcal{H}^2 - \frac{1}{\alpha}(2-\alpha)\|\xi^k -\xi^{k+1}\|_\mathcal{H}^2, \quad  \forall  \; \xi^* \in \Xi^*.
\end{eqnarray}
Summing \eqref{THM-H-C-01} over $k=0,\ldots,N$, we have
$$\sum_{k=0}^{N}\frac{1}{\alpha}(2-\alpha)\|\xi^k-{\xi}^{k+1}\|_\mathcal{H}^2\leq\|\xi^0-\xi^\ast\|_\mathcal{H}^2, \quad  \forall  \; \xi^* \in \Xi^*.$$
Moreover, it follows from \eqref{point-2} that the sequence $\{\|\xi^k-{\xi}^{k+1}\|_\mathcal{H}^2\}$ is monotonically non-increasing. It thus holds that
$$\frac{1}{\alpha}(2-\alpha)(N+1)\|\xi^N-{\xi}^{N+1}\|_\mathcal{H}^2\leq\sum_{k=0}^{N}\frac{1}{\alpha}(2-\alpha)\|\xi^k-{\xi}^{k+1}\|_\mathcal{H}^2\leq\|\xi^0-\xi^\ast\|_\mathcal{H}^2, \quad  \forall  \; \xi^* \in \Xi^*,$$
which results in the assertion of this theorem immediately.
\end{proof}

\smallskip
Let $d:=\inf\big\{\|\xi^0-\xi^\ast\|_\mathcal{H}^2 \mid  \xi^* \in \Xi^*\big\}$.  Then, according to Theorem \ref{last-theorem}, we have
$$\|\xi^N-{\xi}^{N+1}\|_\mathcal{H}^2\leq\frac{\alpha d}{(2-\alpha)(N+1)}=O(1/N).$$
Recall the inequality \eqref{Prediction-w} and the fact $\mathcal{Q}=\mathcal{H}\mathcal{M}$ (see \eqref{Matrix-GQ}). Then, $\tilde{w}^k$ is a solution point of the VI \eqref{VI1} if and only if $\|\xi^k-{\xi}^{k+1}\|_\mathcal{H}^2=0$. Hence, the assertion (\ref{keyin1}) indicates a worst-case $O(1/N)$ convergence rate in the point-wise sense for the proposed new algorithm \eqref{Rank2M}.

\section{Numerical experiments}\label{sec5}
\setcounter{equation}{0}

In this section, we apply the proposed rank-two relaxed parallel splitting version of the ALM \eqref{Rank2M} to some application problems, and validate its efficiency by some numerical results. We particularly compare the new algorithm  \eqref{Rank2M} with the JSALM \eqref{PSALM-P}-\eqref{PSALM-C} and the PJALM \eqref{PJALMP}. Our codes were written in Python 3.9 and were  executed  in a Lenovo laptop with  2.20 GHz Intel Core i7-8750H CPU and 16 GB memory.

\subsection{Latent variable Gaussian graphical model selection}\label{sec5.1}

\subsubsection{Model}\label{sec5.1.1}
We first consider the latent variable Gaussian graphical model selection problem which was proposed in \cite{chandrasekaran2012latent}. Its model is
\begin{equation}\label{LVGGMS}
  \begin{aligned}
  \min &\;\; \Phi(X,Y,Z):=\langle X,C\rangle-\log\det(X)+\nu\|Y\|_1+\mu\,\hbox{tr}(Z)\\[0.2cm]
      \hbox{s.t.} & \;\;\;X-Y+Z=0,\;\;Z\succeq0,
  \end{aligned}
\end{equation}
where $C\in\Re^{n\times n}$ is the covariance matrix obtained from the observation,  $\nu>0$ and $\mu>0$ are given positive weight parameters,  $\|\cdot\|_1$ is the entry-wise $\ell_1$ norm, and $\hbox{tr}(\cdot)$ is the trace of a matrix. Clearly, the model \eqref{LVGGMS} is a $3$-block separable convex programming problem with matrix variables, but it can be also regarded as a special case of \eqref{A-Problem-M} with $p=3$ if the variables in (\ref{A-Problem-M}) are extended to matrices.

\subsubsection{Subproblems}\label{sec5.1.2}

When the proposed rank-two relaxed parallel splitting version of the ALM \eqref{Rank2M} is applied to \eqref{LVGGMS}, the $x_i$-subproblems in the parallel splitting ALM step \eqref{Pall-XL} can be specified as
\begin{subequations}\label{LV-P}
  \begin{numcases}{}
  \label{LV-P1} \tilde{X}^{k} = \arg\min\Big\{\langle X,C\rangle-\log\det(X)-\langle\Lambda^k,X\rangle+\frac{\beta}{2}\|X-X^k\|_F^2\;\big|\;X\in\Re^{n\times n}\Big\},\\
  \label{LV-P2} \tilde{Y}^{k} = \arg\min\Big\{\nu\|Y\|_1-\langle\Lambda^k,-Y\rangle+\frac{\beta}{2}\|Y-Y^k\|_F^2\;\big|\;Y \in\Re^{n\times n}\Big\},\\
  \label{LV-P3} \tilde{Z}^{k} = \arg\min\Big\{\mu\hbox{tr}(Z) -\langle\Lambda^k,Z\rangle+\frac{\beta}{2}\|Z-Z^k\|_2^2\;\big|\;Z\succeq0,\,Z\in\Re^{n\times n} \Big\}.
  \end{numcases}
  \end{subequations}

For the $X$-subproblem \eqref{LV-P1}, according to the first-order optimality condition, it suffices to solve the nonlinear equation system:
\begin{equation}\label{num}
  C-X^{-1}-\Lambda^k+\beta(X-X^k)=0.
\end{equation}
Multiplying $X$ to both sides of \eqref{num}, we have
\begin{equation}\label{num1}
  \beta X^2+(C-\beta X^k-\Lambda^k)X-I=0.
\end{equation}
Let $UDU^T=C-\beta X^k-\Lambda^k$ be the eigenvalue decomposition. Substituting it into \eqref{num1} and setting $P=U^TXU$, we have
$$\beta PP+D P-I=0,  \;\;  \hbox{and thus} \;\;  P_{ii}=\frac{1}{2\beta}\Big(-D_{ii}+\sqrt{D_{ii}^2+4\beta}\;\Big).$$
Hence, we obtain that $\tilde{X}^k=U\,\hbox{diag}(P)\, U^T$ is a solution of \eqref{num}.
For the $Y$-subproblem \eqref{LV-P2},  since
$$\tilde{Y}^k=\arg\min_Y
\Big\{\nu\|Y\|_1-\langle\Lambda^k,-Y\rangle+\frac{\beta}{2}\|Y-Y^k\|_F^2\Big\}
=\arg\min_Y\Big\{\|Y\|_1+\frac{\beta}{2\nu}\big\|Y-(Y^k-\frac{1}{\beta}\Lambda^k)\big\|_F^2\Big\},$$
its solution can be expressed exactly by the soft shrinkage operator defined in, e.g., \cite{Chen,tao2011recovering}. For the $Z$-subproblem \eqref{LV-P3}, note that $$\tilde{Z}^{k}=\arg\min_{Z\succeq0}\Big\{\mu\hbox{tr}(Z)-\langle\Lambda^k,Z\rangle+\frac{\beta}{2}\|Z-Z^k\|_2^2\Big\} =\arg\min_{Z\succeq0}\Big\{\big\|Z-(Z^k+\frac{1}{\beta}(\Lambda^k-\mu I))\big\|_2^2\Big\},$$
and let $\textstyle VD_1V^T=Z^k+\frac{1}{\beta}(\Lambda^k-\mu I)$ be an eigenvalue decomposition. Then, it is trivial to verify  that $\tilde{Z}^k=V\max\{D_1,0\}\, V^T$ is a solution of \eqref{LV-P3}, where $\max\{D_1,0\}$ is taken component-wisely.

\subsubsection{Settings}\label{sec6.1.3}

To simulate, we follow some standard ways (e.g., as elucidated on  \url{http://web.stanford.edu/~boyd/papers/admm/covsel/covsel_example.html}) to generate the covariance matrix $C$. More concretely, we first randomly generate a sparse matrix $U\in\Re^{n\times n}$ with sparsity parameter $s=1$\textperthousand, whose nonzero entries are set to $1$ with a uniform distribution; and set $A=U+U^T$ if $U+U^T\succ0$ or $A=U+U^T+1.1|\min(\hbox{eig}(U+U^T))|I_n$ otherwise. We then set $S=A^{-1}$ as the true covariance matrix, and compute $D$ via a multivariate normal distribution whose mean is $e_n$, covariance matrix is $S$ and total sample number is $10n$. Finally, we generate $C$ by calculating the covariance of $D$. In addition, we take $\nu=0.005$ and $\mu=0.05$ in the model \eqref{LVGGMS}. The stopping criterion is
\begin{equation}\label{error-LVGGMS}
\max\Big\{\|X^{k}-X^{k-1}\|_F,\|Y^{k}-Y^{k-1}\|_F,\|Z^{k}-Z^{k-1}\|_F, \|X^k-Y^k+Z^k\|_F \Big\}<10^{-10}.
\end{equation}

For the common parameter $\beta$, we fix it as $\beta=0.2$ for all these three algorithms. For other parameters, each of them is well tuned for different algorithms individually. Recall that convergence of the JSALM \eqref{PSALM-P}-\eqref{PSALM-C} is theoretically guaranteed for any $\alpha\in(0,2(1-\sqrt{p/(p+1)}))$ and that of the PJALM \eqref{PJALMP} is guaranteed for any $\tau>p-1$.  Since $p=3$, we choose the asymptotically largest values $\alpha=2(1-\sqrt{3/(3+1)})$ in the JSALM \eqref{PSALM-P}-\eqref{PSALM-C} and $\tau=2$ in the PJALM \eqref{PJALMP}, respectively. For the new algorithm \eqref{Rank2M}, we choose $\alpha=1.5$, which is an empirically probed value with satisfactory numerical performance. Moreover, we set $(X^0,Y^0,Z^0,\Lambda^0)=(I_{n},2I_{n},I_{n},\mathbf{0}_{n\times n})$ for all the algorithms under comparison.

\subsubsection{Numerical results}\label{sec6.1.4}

In Table \ref{Ta1}, iteration numbers (``Iter"), computing {time} in seconds (``CPU(s)"), and values of objective function at the last iteration (``$\Phi(k)$") are reported for various sizes of $n$. Recall that all the algorithms under comparison allow to solve their $x_i$-subproblems in parallel. We count the sum of all its $x_i$-subproblems for each algorithm. We also test each scenario 5 times, and report the average of computing time to try to avoid the effect of natural oscillations of computing environment. According to Table \ref{Ta1}, the new algorithm \eqref{Rank2M} performs much more efficiently than the JSALM \eqref{PSALM-P}-\eqref{PSALM-C} and the PJALM \eqref{PJALMP}.

As mentioned, we choose $\alpha=1.5$ for the new algorithm (\ref{Rank2M}). In Figure \ref{fig1}, we demonstrate the numerical performance of other values of $\alpha$ for \eqref{LVGGMS} with $n=100$ and $n=200$, respectively. It is seen from these results that $\alpha=1.5$ is a good choice of \eqref{Rank2M} for solving \eqref{LVGGMS}. Moreover, since all algorithms originated from the ALM, including the three ones under comparison, have the same penalty parameter $\beta$ which may affect numerical performances, we fix $n=100$ and test these algorithms with different values of $\beta$. In Figure \ref{fig2}, we plot the iteration numbers and computing time in seconds for 10 different values of $\beta$ equally distanced in $[0.1,1.0]$, from which efficiency of the new algorithm \eqref{Rank2M} is further shown for different values of $\beta$.

\begin{table}[H]
\caption{Numerical results of \eqref{LVGGMS} with different $n$.}
\vspace{0.15cm}
\centering
\begin{tabular}{l ccccccc cccccccccc}
\toprule
 \multirow{2}{*}{$n$} &      \multicolumn{3}{c}{JSALM}   &   \multicolumn{3}{c}{PJALM}  &  \multicolumn{3}{c}{New algorithm \eqref{Rank2M}}    \cr \cmidrule(lr){2-4} \cmidrule(lr){5-7}\cmidrule(lr){8-10}
                                           &   Iter      &   CPU(s)  &   $\Phi(k)$ &   Iter        &   CPU(s)  &   $\Phi(k)$ &  Iter    &   CPU(s)  &    $\Phi(k)$     \cr
\midrule
    $50$        &   875     &    1.39        &   13.48       &   810        &   1.26       &   13.48       &  133    &   0.20       &     13.48      \\
    $100$      &   806     &    4.36        &   28.85       &   731        &   4.05       &   28.85       &  144    &   0.81       &     28.85        \\
    $200$      &   746     &    22.95      &   66.32       &   697        &   20.21     &   66.32       &  159    &   4.98       &     66.32      \\
    $300$      &   584     &    39.32      &   115.40     &   583        &   37.17     &   115.40     &  245    &   15.87     &     115.40      \\
    $400$      &   707     &   102.95     &   100.36     &   586        &   81.99     &   100.36     &  259    &   36.45     &     100.36      \\
    $500$      &   803     &   192.95     &   36.39       &   665        &   152.12   &   36.39       &  164    &   37.55     &     36.39      \\
  \bottomrule
 \end{tabular}
 \label{Ta1}
\end{table}

\begin{figure}[H]
\centering
\subfigure[$n=100$]{
\includegraphics[width=8.5cm]{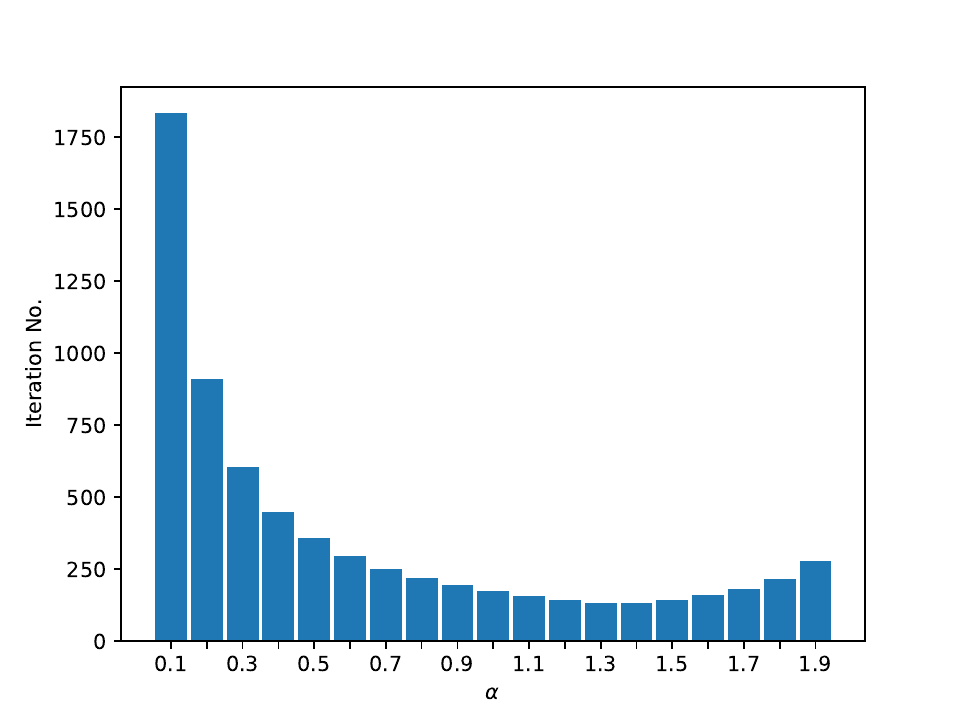}
}\hspace{-10mm}
\subfigure[$n=200$]{
\includegraphics[width=8.5cm]{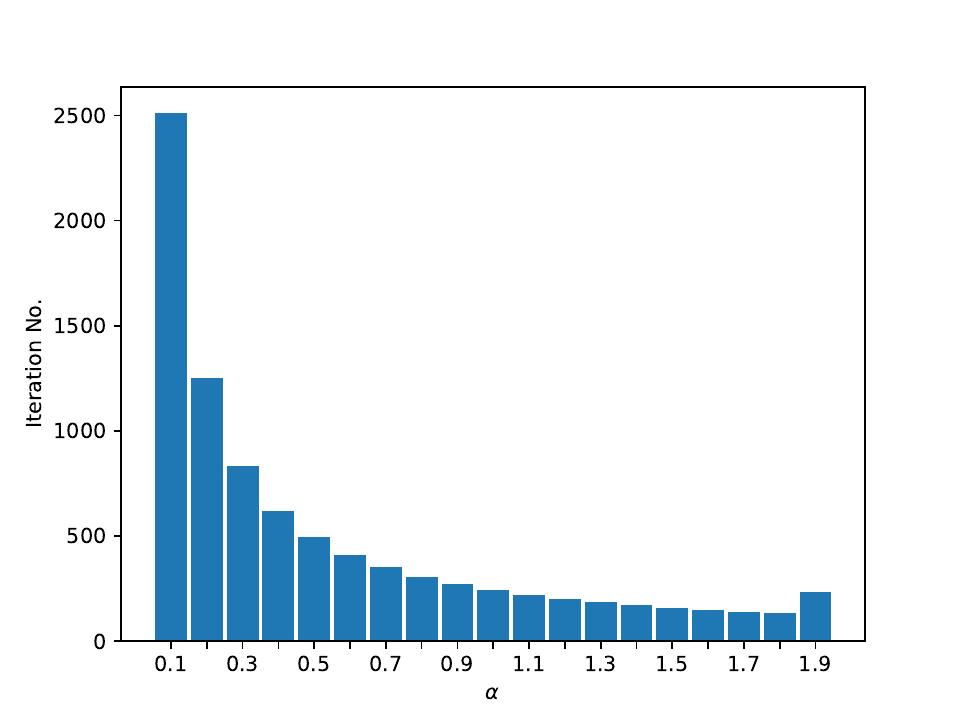}
}
\caption{Numerical results of \eqref{Rank2M} for \eqref{LVGGMS} with different $\alpha \in [0.1,1.9]$.}
\label{fig1}
\end{figure}

\begin{figure}[H]
\centering
\subfigure[$n=100$]{
\includegraphics[width=8.5cm]{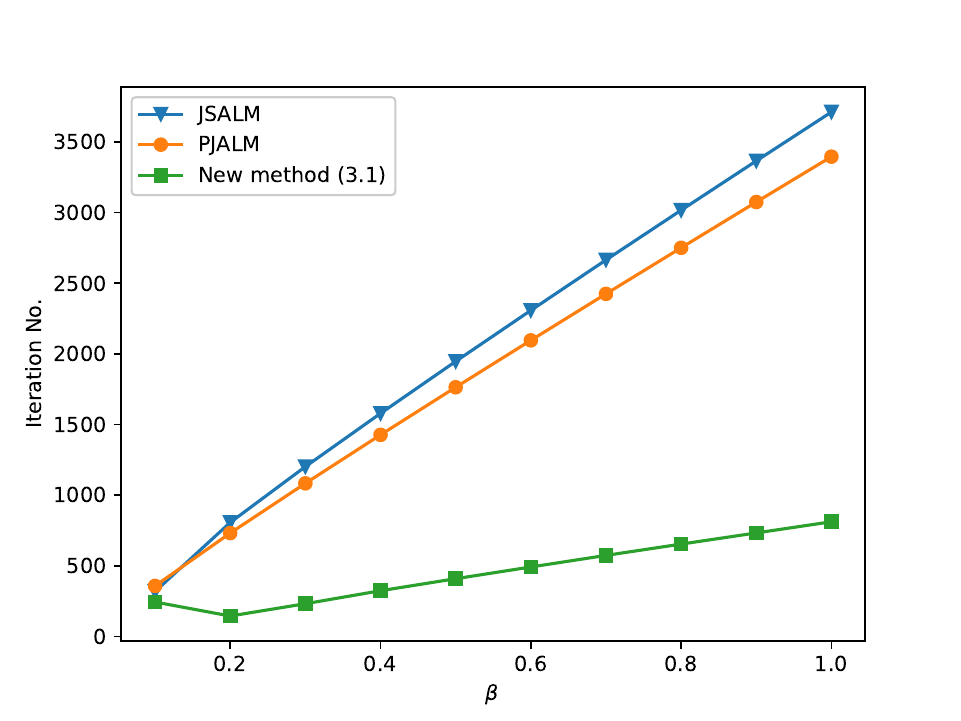}
}\hspace{-10mm}
\subfigure[$n=100$]{
\includegraphics[width=8.5cm]{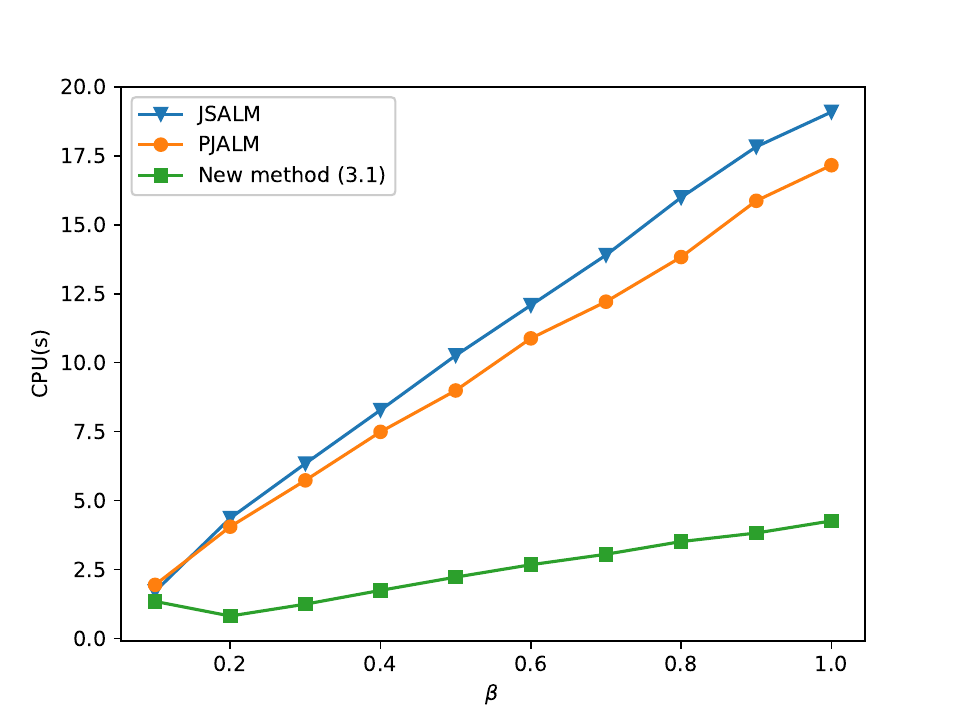}
}
\caption{Numerical results of \eqref{LVGGMS} with $n=100$ with different $\beta \in[0.1,1]$. }
\label{fig2}
\end{figure}

\subsection{Matrix decomposition problem}\label{sec5.2}

\subsubsection{Model}\label{sec5.2.1}

Then, we consider the matrix decomposition problem studied in \cite{Parikh2014}:
\begin{equation}\label{MDP}
  \begin{aligned}
  \min  &\;\; \Psi(X,Y,Z):=\|X\|_F^2+\mu\|Y\|_1+\nu\|Z\|_*  \\[0.2cm]
      \hbox{s.t.} & \;\;\;X+Y+Z=A,
  \end{aligned}
\end{equation}
where $A\in\Re^{m\times n}$ is a given data matrix, $\|\cdot\|_F$, $\|\cdot\|_1$ and $\|\cdot\|_*$ denote the $F$-norm, $l_1$-norm and nuclear norm, respectively, and $\mu>0$ and $\nu>0$ are trade-off parameters. The model \eqref{MDP} aims at decomposing $A$ into the sum of a matrix $X$ with small entries, a sparse matrix $Y$, and a low rank matrix $Z$. It is also closely related to the robust principal components analysis (RPCA) as studied in \cite{Can2011,tao2011recovering}. The model \eqref{MDP} is also a special case of \eqref{A-Problem-M} with $p=3$ and matrix variables.

\subsubsection{Subproblems}\label{sec5.2.2}

When the proposed rank-two relaxed parallel splitting version of the ALM \eqref{Rank2M} is applied to \eqref{MDP}, the $x_i$-subproblems in the parallel splitting ALM step \eqref{Pall-XL}  can be specified as
\begin{subequations}\label{M-Ps}
  \begin{numcases}{}
  \label{M-P1} \tilde{X}^{k} = \arg\min\Big\{\|X\|_F^2-\langle\Lambda^k,X\rangle+\frac{\beta}{2}\|X-X^k\|_F^2\;\big|\;X\in\Re^{m\times n}\Big\},\\
  \label{M-P2} \tilde{Y}^{k} = \arg\min\Big\{\mu\|Y\|_1-\langle\Lambda^k,Y\rangle+\frac{\beta}{2}\|Y-Y^k\|_F^2\;\big|\;Y\in\Re^{m\times n}\Big\},\\
  \label{M-P3} \tilde{Z}^{k} = \arg\min\Big\{\nu\|Z\|_\ast-\langle\Lambda^k,Z\rangle+\frac{\beta}{2}\|Z-Z^k\|_F^2\;\big|\;Z\in\Re^{m\times n}\Big\}.
  \end{numcases}
  \end{subequations}
For the $X$-subproblem \eqref{M-P1}, its solution point can be specified as
$$
\tilde{X}^{k} = (\Lambda^k + \beta X^k)/(2+\beta).
$$
For the  $Y$-subproblem \eqref{M-P2}, its solution can be represented exactly by the shrinkage operator defined in \cite{Chen}. For the $Z$-subproblem \eqref{M-P3}, it can be represented precisely by the proximal operator of the nuclear norm discussed in \cite{Can2013}.

\subsubsection{Settings}\label{sec6.2.3}

To simulate, we follow \cite{Parikh2014} to generate the data matrix $A=L+S+V$, where $L$, $S$ and $V$ are  rank-4 matrix, sparse matrix and noise matrix, respectively. More concretely, we set $L=L_1L_2$ with $L_1\in\Re^{m\times4}$ and $L_2\in\Re^{4\times n}$; entries of both $L_1$ and $L_2$ satisfy the normal distribution $\mathcal{N}(0,1)$; we generate the sparse matrix $S$  with density 0.05, with nonzero entries sampled uniformly from $\{-10, 10\}$; and entries of the noise matrix $V$ are sampled by the normal distribution $\mathcal{N}(0,10^{-3})$. In addition, we set $\mu=0.15\rho_1$ and $\nu=0.15\rho_2$, where $\rho_1$ and $\rho_2$ are the  entry-wise $\ell_\infty$ norm and the spectral norm of $A$, respectively. The stopping criterion is
\begin{equation}\label{error-matrix}
\max\Big\{\|X^{k}-X^{k-1}\|_F,\|Y^{k}-Y^{k-1}\|_F,\|Z^{k}-Z^{k-1}\|_F, \|X^k+Y^k+Z^k-A\|_F \Big\}<10^{-10}.
\end{equation}

For parameters, we follow the strategy mentioned in Section \ref{sec6.1.3}. That is, since $p=3$, we choose $\alpha=2(1-\sqrt{3/(3+1)})$ in the JSALM \eqref{PSALM-P}-\eqref{PSALM-C}, $\tau=2$ in the PJALM \eqref{PJALMP} and $\alpha=1.5$ in the new algorithm \eqref{Rank2M}. All these parameters are well tuned for each algorithm individually. For all the algorithms under comparison, $\beta=2.0$ and the initial point is $(X^0,Y^0,Z^0,\Lambda^0)=(\mathbf{0}_{m\times n},\mathbf{0}_{m\times n},\mathbf{0}_{m\times n},\mathbf{0}_{m\times n})$.

\subsubsection{Numerical results}\label{sec6.2.4}

In Table \ref{Ta2}, iteration numbers (``Iter"), computing time in seconds (``CPU(s)"), and values of the objective function at the last iteration (``$\Psi(k)$") are reported for various cases of $(m,n)$. Here, the computing time is counted by the same way as mentioned in Section \ref{sec6.1.4}. Table \ref{Ta2} further shows efficiency of the new algorithm \eqref{Rank2M}.  In Figure \ref{fig4}, we choose $(m,n)=(100,200)$ and $(m,n)=(200,500)$ for \eqref{MDP}, and plot the numerical result of  \eqref{Rank2M} with various values of $\alpha\in [0.1,1.9]$. It can be seen again that $\alpha=1.5$ is a satisfactory choice of \eqref{Rank2M} for solving \eqref{MDP}. Also, we fix $(m,n)=(200,500)$, and test the performance of these three algorithms with 10 different values of $\beta$ equally distributed in $[1,10]$ in Figure \ref{fig3}, from which efficiency of the new algorithm \eqref{Rank2M} is numerically verified again.

\begin{table}[H]
\caption{Numerical results of \eqref{MDP} with different $(m,n)$.}
\vspace{0.10cm}
\centering
\setlength{\tabcolsep}{1.6mm}{
\begin{tabular}{l ccccccc cccccccccc}
\toprule
\multirow{2}{*}{$(m,n)$} &      \multicolumn{3}{c}{JSALM}   &   \multicolumn{3}{c}{PJALM}  &  \multicolumn{3}{c}{New algorithm \eqref{Rank2M}}    \cr \cmidrule(lr){2-4} \cmidrule(lr){5-7}\cmidrule(lr){8-10}
                          &   Iter     &   CPU(s)  &   $\Psi(k)$        &   Iter        &   CPU(s)  &   $\Psi(k)$        &  Iter    &  CPU(s)   &    $\Psi(k)$     \cr
 \midrule
$(50,100)$        &   340     &    0.49    &   4281.15        &   245     &    0.35    &   4281.15        &   86       &    0.13    &   4281.15             \\
$(100,200)$      &   305     &    1.06    &   15523.07      &   212     &    0.70    &   15523.07      &   77       &    0.27    &   15523.07           \\
$(200,500)$      &   295     &    5.10    &   68194.13      &   200     &    3.13    &   68194.13      &   75       &    1.24    &   68194.13           \\
$(300,800)$      &   294     &    20.02  &   170884.53    &   197     &    12.70  &   170884.53    &   75       &    4.78    &   170884.53         \\
$(400,800)$      &   294     &    27.25  &   224700.41    &   198     &    17.36  &   224700.41    &   76       &    6.57    &   224700.41         \\
$(500,1000)$    &   295     &    45.04  &   356127.35    &   197     &    28.18  &   356127.35    &   77       &    11.03  &   356127.35         \\
  \bottomrule
 \end{tabular}}
 \label{Ta2}
\end{table}

\begin{figure}[H]
\centering
\subfigure[$(m,n)=(100,200)$]{
\includegraphics[width=8.5cm]{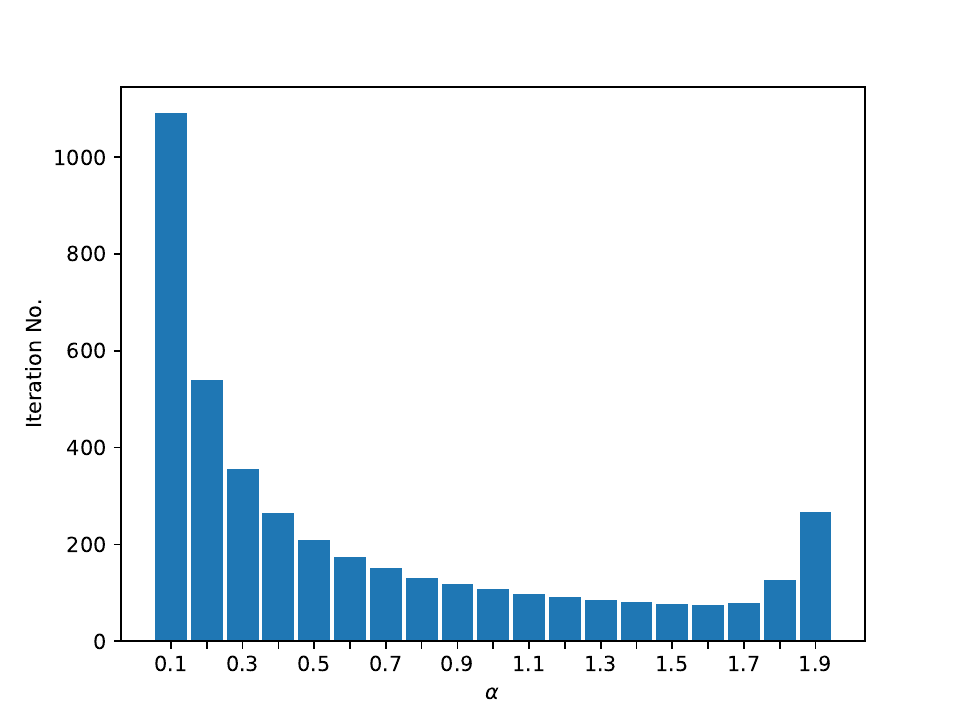}
}\hspace{-10mm}
\subfigure[$(m,n)=(200,500)$]{
\includegraphics[width=8.5cm]{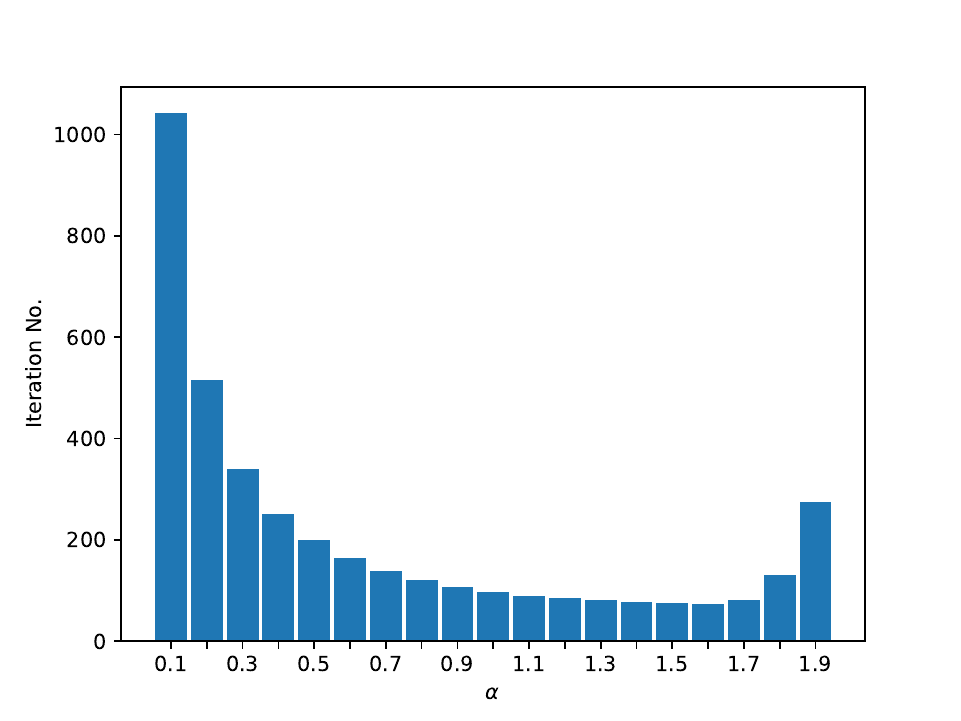}
}
\caption{Numerical results of \eqref{Rank2M} for \eqref{MDP} with different values of $\alpha \in [0.1,1.9]$.}
\label{fig4}
\end{figure}

\begin{figure}[H]
\centering
\subfigure[$(m,n)=(200,500)$]{
\includegraphics[width=8.5cm]{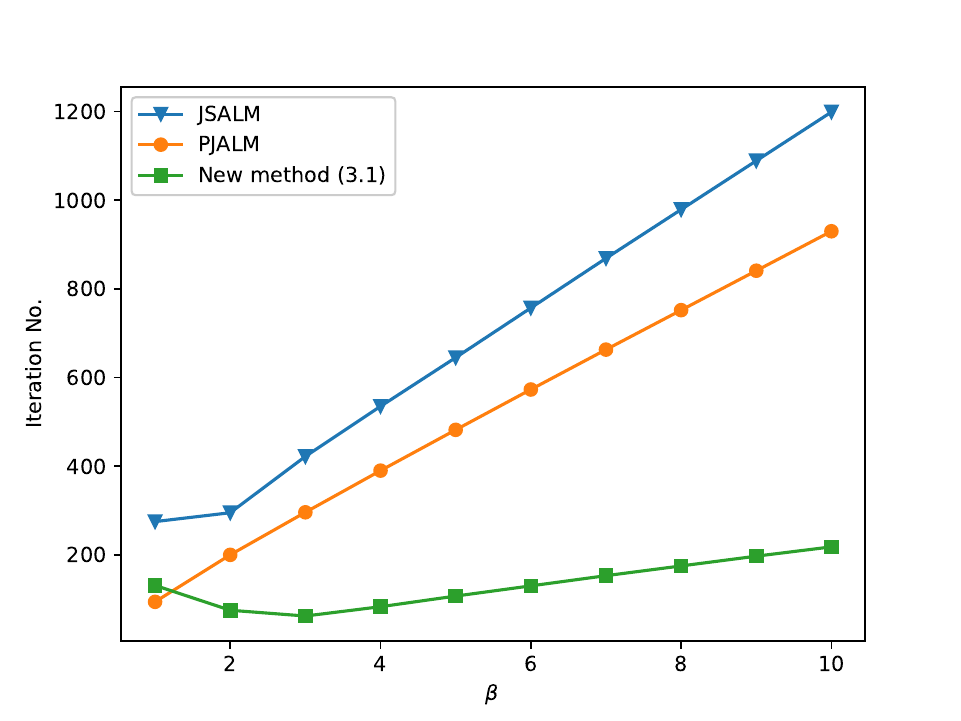}
}\hspace{-10mm}
\subfigure[$(m,n)=(200,500)$]{
\includegraphics[width=8.5cm]{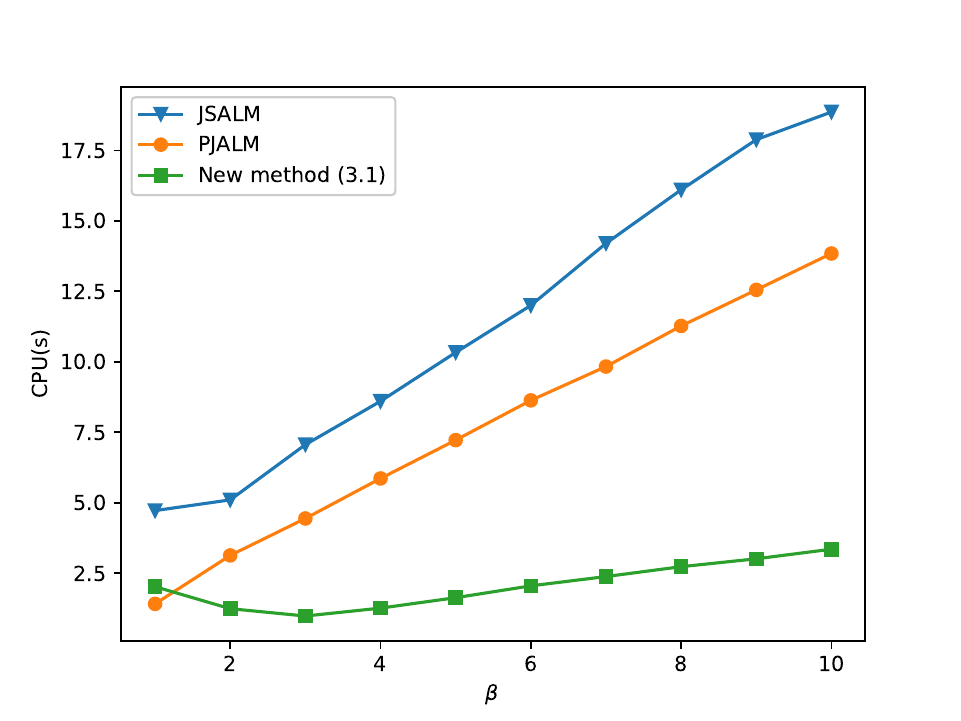}
}
\caption{Numerical results of \eqref{MDP} with $(m,n)=(200,500)$ with different $\beta \in[1,10]$.}
\label{fig3}
\end{figure}

\subsection{Exchange problem}\label{sec5.3}

\subsubsection{Model}\label{sec5.3.1}

Finally, we consider the exchange problem to minimize a function with a common objective among various agents. This problem arises in economics, and as discussed in \cite{boyd2010distributed,Deng2017}, its mathematical form is
\begin{equation}\label{Exchange}
  \min\bigg\{\sum_{i=1}^p\theta_i(x_i) \mid \sum_{i=1}^{p}x_i=\textbf{0}_n; \; x_i\in\Re^n, \, i=1,\ldots,p \bigg\},
\end{equation}
where $\theta_i:\Re^n\rightarrow \Re$ is a cost function corresponding to the agent $i$ for $i=1,\ldots,p$, and $p$ is the number of agents. We focus on a concrete example of \eqref{Exchange} as following:
\begin{equation}\label{Exchange1}
\min\bigg\{\frac{1}{2}\sum_{i=1}^p\|B_ix_i-c_i\|_2^2 \mid \sum_{i=1}^{p}x_i=\textbf{0}_n; \; x_i\in\Re^n, \, i=1,\ldots,p \bigg\},
\end{equation}
where $B_i\in\Re^{l\times n} \, (l<n)$ and $c_i\in\Re^l$ are given matrices and constant vectors for $i=1,\ldots,p$. The model (\ref{Exchange1}) is a special case of (\ref{A-Problem-M}) with $A_i=I_n$ for $i=1,\ldots,p$, and $b=\textbf{0}_n$. We also refer to \cite{Deng2017} for more discussions on the model \eqref{Exchange1}.

\subsubsection{Subproblems}\label{sec5.3.2}

When the new algorithm \eqref{Rank2M} is applied to \eqref{Exchange1}, it is easy to verify that the resulting iterative scheme is
$$\left\{\begin{array}{ccl}
      \tilde{x}_i^k & = & [\beta I_n +B_i^TB_i ]^{-1}(B_i^Tc_i + \lambda^k + \beta x_i^k), \; i=1,\ldots,p,  \\[0.2cm]
       x_i^{k+1}    & = & x_i^k - \alpha(x_i^k - \tilde{x}_i^k) - \frac{\alpha}{p+1}\sum_{j=1}^p\tilde{x}_j^k, \; i=1,\ldots,p,  \\[0.2cm]
      \lambda^{k+1} & = & \lambda^k - \frac{\alpha \beta}{p+1}\sum_{j=1}^p\tilde{x}_j^k.
 \end{array}\right.$$
All the subproblems can be solved easily with closed-form solutions.

\subsubsection{Settings}\label{sec5.3.3}

To simulate, we follow  \cite{Deng2017} and generate $x_i^\ast\in\Re^n\;(i=1,\ldots,p-1)$  randomly whose entries satisfy the normal distribution $\mathcal{N}(0,1)$, and set $x_p^\ast=-\sum_{i=1}^{p-1}x_i^\ast$, $B_i$ $(i=1,\ldots,p)$ are random Gaussian matrices, and each $c_i$ is computed by $c_i=B_ix_i^\ast$. It is obvious that the just-defined $x^\ast=(x_1^\ast,\ldots,x_p^\ast)$ is an optimal solution of \eqref{Exchange1} (may be not unique) and the minimum of the objective function of  \eqref{Exchange1} is 0. We consider the stopping criterion for \eqref{Exchange1} as
\begin{equation}\label{error-exchange}
\max\big\{\|x_i^k-x_i^{k-1}\|_2,\;i=1,\ldots,p,\;\; \|\textstyle\sum_{i=1}^px_i^k\|_2\big\}<10^{-5}.
\end{equation}
In addition, we set $n=50$ and $l=30$ in \eqref{Exchange1}, and choose $x_i^0=\textbf{0}_n\;(i=1,\ldots,p)$ and $\lambda^0=\textbf{0}_n$ as the initial iterate for all the algorithms under comparison.

The common parameter $\beta=1.0$ for all the three algorithms. For other individual parameters, we follow the same strategy as mentioned in Sections \ref{sec6.1.3} and \ref{sec6.2.3}, and set $\alpha=2(1-\sqrt{p/(p+1)})$ in the JSALM \eqref{PSALM-P}-\eqref{PSALM-C}, $\tau=p-1$ in the PJALM \eqref{PJALMP} for all $p$, and $\alpha=1.5$ in the new algorithm \eqref{Rank2M}.

\subsubsection{Numerical results}\label{sec5.3.4}

In Table \ref{Ta3}, iteration numbers (``Iter"), computing time in seconds (``CPU(s)"), and the errors (``Error") at the last iteration are reported for different values of $p$. Here, we define
\begin{equation}\label{model-error}
  \hbox{Error}(k):=\max\big\{\frac{1}{2}\textstyle\sum_{i=1}^p\|B_ix_i^k-c_i\|_2^2, \;\; \textstyle\|\sum_{i=1}^px_i^k\|_2\big\}.
\end{equation}
Efficiency of the new algorithm \eqref{Rank2M} is further demonstrated in Table \ref{Ta3} for \eqref{Exchange1}. Note that the computing time is counted by the same way as mentioned in Sections \ref{sec6.1.4} and \ref{sec6.2.4}. Compared with the previous examples \eqref{LVGGMS} and \eqref{MDP} which are both 3-block cases, the example \eqref{Exchange1} can have much larger number of blocks and the conditions \eqref{PSALM-step} and \eqref{PJALMC} become too restrictive (see, e.g.,  Table \ref{Ta4} for the accordingly computed step sizes with different $p$ for the JSALM \eqref{PSALM-P}-\eqref{PSALM-C}), and thus the convergence of the JSALM \eqref{PSALM-P}-\eqref{PSALM-C} and the PJALM \eqref{PJALMP} is substantially slowed down for large values of $p$. Since the new algorithm \eqref{Rank2M} does not have any additional condition depending on $p$, it performs very well even when $p$ is large, and for this case it can accelerate the JSALM \eqref{PSALM-P}-\eqref{PSALM-C} and the PJALM \eqref{PJALMP} significantly to a different scale. Moreover, its performance is very stable with respect to the value of $p$. These unique advantages make the new algorithm \eqref{Rank2M} very attractive to the case \eqref{A-Problem-M} with large $p$.

In Figure \ref{fig6}, we plot the numerical performance of \eqref{Rank2M} with various values of $\alpha \in [0.1,1.9]$ when $p=100$ and $p=200$ in \eqref{Exchange1}, respectively. This figure further justifies that $\alpha=1.5$ is a generally good choice of the new algorithm (\ref{Rank2M}) for various applications. In Figure \ref{fig5}, we report the performance of all the three algorithms for 10 different values of $\beta$ equally distributed in $[1,10]$ and further show efficiency of the new algorithm \eqref{Rank2M}.


\begin{table}[H]
\caption{Numerical results of \eqref{Exchange1} with different $p$.}
\vspace{0.15cm}
\centering
\begin{tabular}{l ccccccc cccccccccc}
\toprule
\multirow{2}{*}{$p$} &      \multicolumn{3}{c}{JSALM}   &   \multicolumn{3}{c}{PJALM}  &  \multicolumn{3}{c}{New algorithm \eqref{Rank2M}}    \cr \cmidrule(lr){2-4} \cmidrule(lr){5-7}\cmidrule(lr){8-10}
       &   Iter        &    CPU(s)   &  Error        &   Iter       &   CPU(s)  &   Error        &  Iter  & CPU(s) &      Error     \cr
  \midrule
$100$      &   3474    &    21.36       &   9.96e-6    &   476      &    2.79      &   8.88e-7    &  68    &   0.41    &     5.36e-6       \\
$200$      &   7227    &    87.25       &   9.99e-6    &   864      &    10.08    &   1.86e-6    &  63    &   0.74    &     6.20e-6       \\
$300$      &   11084  &    220.76     &   9.99e-6    &   1193    &    22.72    &   6.21e-6    &  62    &   1.21    &     7.60e-6       \\
$400$      &   15011  &    407.90     &   9.99e-6    &   1676    &    44.76    &   1.07e-5    &  62    &   1.67    &     5.23e-6       \\
$500$      &   18988  &    745.48     &   9.99e-6    &   2251    &    77.19    &   1.51e-5    &  62    &   2.17    &     5.09e-6       \\
$600$      &   23004  &    969.08     &   9.99e-6    &   2384    &   100.17   &   2.64e-5    &  62    &   2.57    &     5.50e-6       \\
$700$      &   27055  &    1330.69   &   9.99e-6    &   3437    &   171.08   &   2.46e-5    &  60    &   2.95    &     5.15e-6       \\
$800$      &   31133  &    1725.53   &   9.99e-6    &   2722    &   153.35   &   9.76e-5    &  61    &   3.39    &     5.71e-6       \\
$900$      &   35238  &    2208.85   &   9.99e-6    &   4175    &   266.19   &   2.73e-5    &  60    &   3.77    &     6.31e-6       \\
$1000$    &   39364  &    2786.61   &   9.99e-6    &   4307    &   306.53   &   7.24e-5    &  60    &   4.17    &     6.06e-6       \\
  \bottomrule
 \end{tabular}
 \label{Ta3}
\end{table}

\begin{table}[H]
\caption{Step size of the JSALM \eqref{PSALM-P}-\eqref{PSALM-C} for various values of $p$.}
\vspace{0.35cm}
 \centering
 \setlength{\tabcolsep}{2.0mm}{
 \begin{tabular}{l ccccccc cccccccccc}
  \toprule
      $p$    &   $5$  &  $10$   &  $100$   &   $500$  &   $1000$   &   $2000$   &   $5000$    \\
  \midrule
  $2\big(1- \sqrt{p/(p+1)}\big)$   &  0.1743   &     0.0931      &   0.0099      &    0.0020    &    9.99e-4    &   5.00e-4   &    2.00e-4        \\
  \bottomrule
 \end{tabular}}
 \label{Ta4}
\end{table}

\begin{figure}[H]
\centering
\subfigure[$p=100$]{
\includegraphics[width=8.6cm]{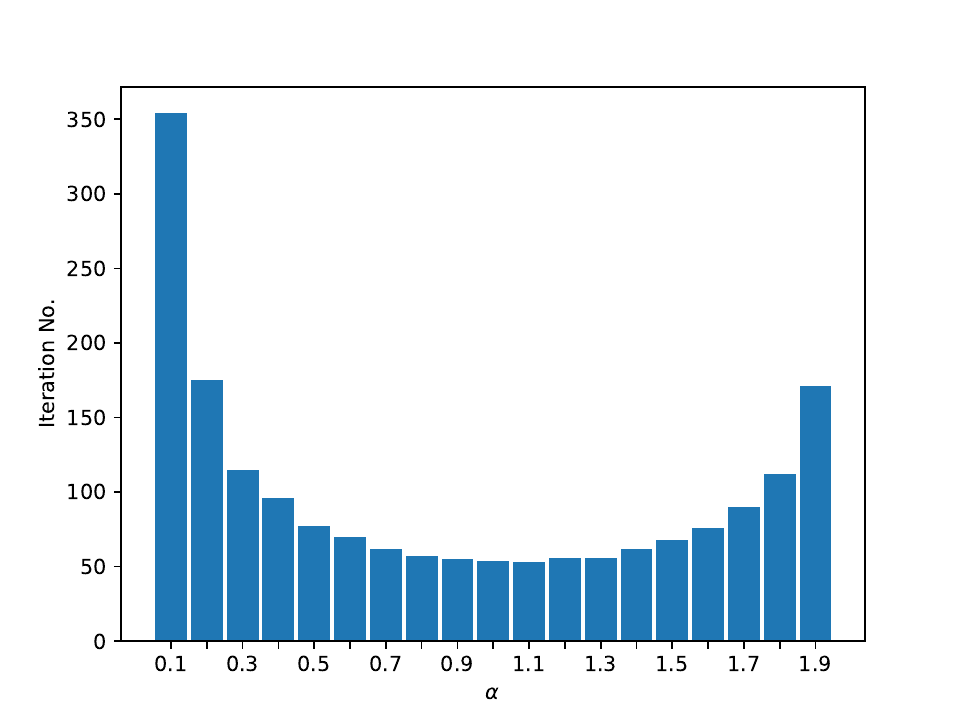}
}\hspace{-10mm}
\subfigure[$p=200$]{
\includegraphics[width=8.6cm]{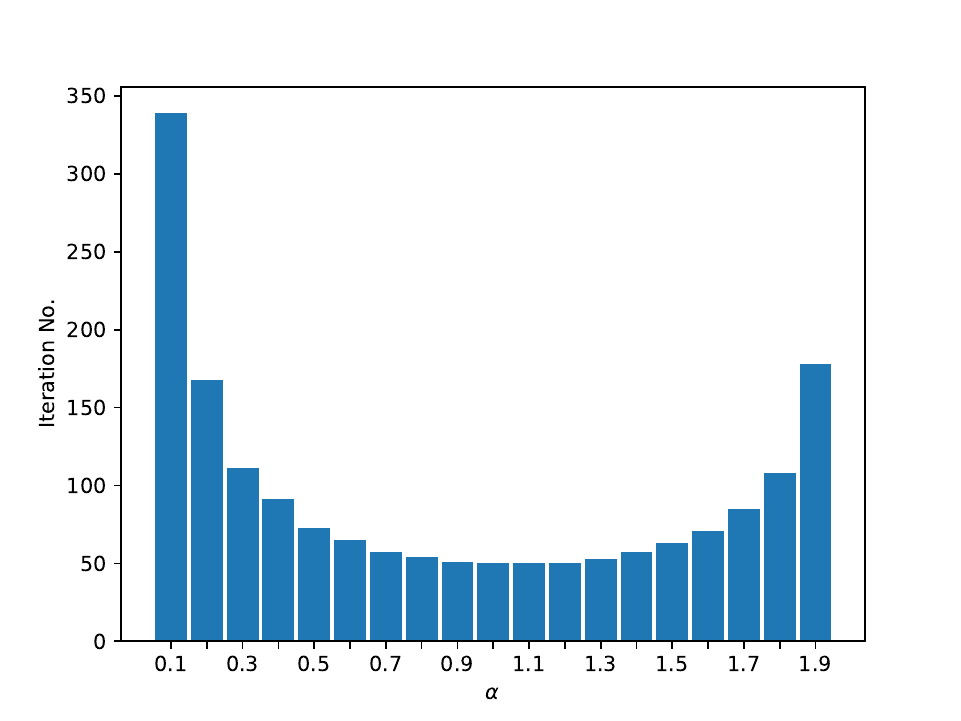}
}
\caption{Numerical results of \eqref{Rank2M} for \eqref{Exchange1} with different $\alpha \in [0.1,1.9]$.}
\label{fig6}
\end{figure}

\begin{figure}[H]
\centering
\subfigure[$p=100$]{
\includegraphics[width=8.6cm]{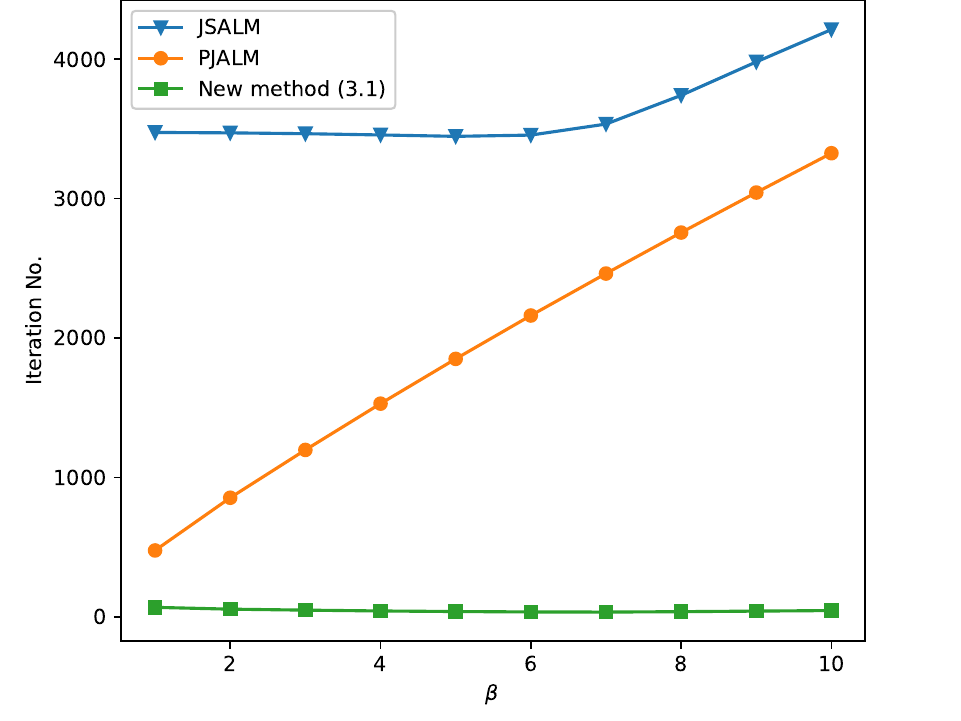}
}\hspace{-10mm}
\subfigure[$p=100$]{
\includegraphics[width=8.6cm]{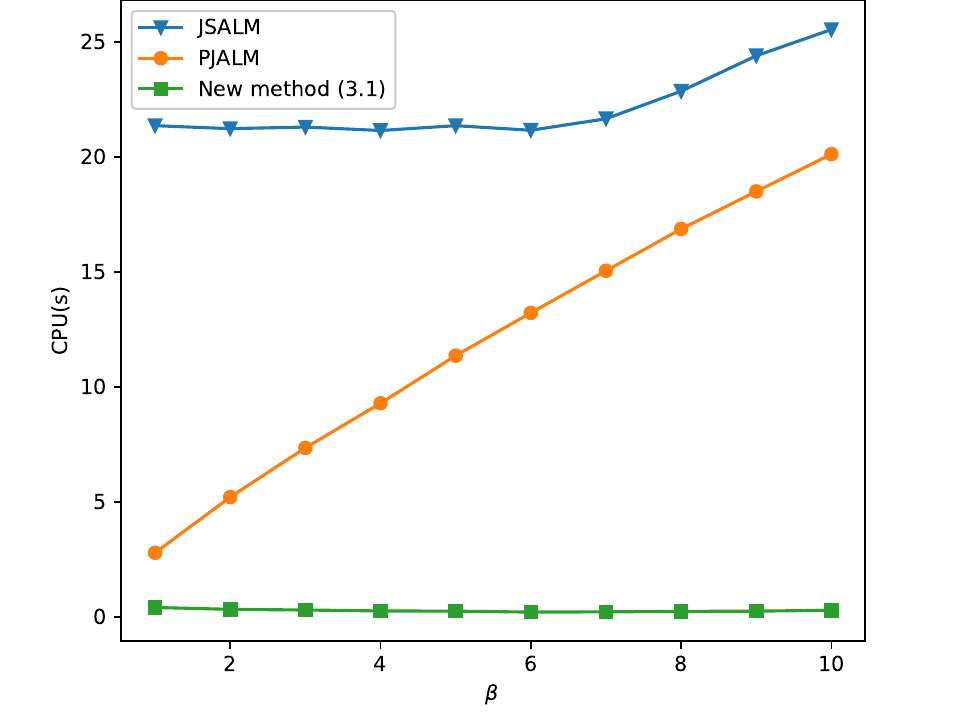}
}
\caption{Numerical results of \eqref{Exchange1} with $p=100$ with different $\beta \in[1,10]$. }
\label{fig5}
\end{figure}

\section{Extensions}\label{secE}
\setcounter{equation}{0}

To expose our idea more clearly, our discussion is focused on the model \eqref{A-Problem-M}. But the proposed new algorithm \eqref{Rank2M} and its theoretical analysis can also be extended to the following separable convex programming problem with linear inequality constraints:
\begin{equation}\label{A-Problem-CI}
  \min\Big\{\sum_{i=1}^p\theta_i(x_i)   \;\big|\;  \sum_{i=1}^p A_ix_i\ge b; \; x_i\in {\cal X}_i, \; i=1,\ldots, p\Big\},
\end{equation}
in which the settings are the same as those in \eqref{A-Problem-M}.

For this extension, we only need to modify the $\lambda$-subproblem in the parallel splitting ALM step \eqref{Pall-XL} as
\begin{equation}\label{inequlemad}
  \tilde{\lambda}^{k} = \big[\lambda^k - \textstyle\beta (\sum_{i=1}^p A_i x_i^{k}-b )\big]_+,
\end{equation}
where $[\cdot]_+$ denotes the standard projection operator onto the non-negative orthant in Euclidean space. The extended scheme of  \eqref{Rank2M} for \eqref{A-Problem-CI} thus reads as
 \begin{subequations} \label{Rank2M-I}
\begin{numcases}{}
\label{Pall-XL-I}\left\{
    \begin{array}{ll}
    \tilde{x}_1^{k} \; \in \; \arg\min\big\{  L(x_1,x_2^k, \ldots, x_p^k, \lambda^k)  +\frac{\beta}{2} \|A_1x_1-A_1x_1^k\|^2  \mid x_1\in\mathcal{X}_1  \big\},\\
       \quad \qquad \qquad \vdots \\
   \tilde{x}_i^{k}  \; \in \; \arg\min\big\{ L(x_1^k,\ldots,x_{i-1}^k, x_i, x_{i+1}^k, \ldots, x_p^k, \lambda^k)   +\frac{\beta}{2} \|A_ix_i-A_ix_i^k\|^2   \mid   x_i \in\mathcal{X}_i \big\},\\
       \quad \qquad  \qquad \vdots \\
   \tilde{x}_p^{k} \; \in \; \arg\min \big\{ L(x_1^k,\ldots, x_{p-1}^k, x_p, \lambda^k)  +\frac{\beta}{2} \|A_px_p-A_px_p^k\|^2  \mid   x_p \in\mathcal{X}_p \big\},\\[0.2cm]
   \tilde{\lambda}^{k} \; = \; [\lambda^k - \beta(\sum_{i=1}^p A_i x_i^k-b)]_+,
    \end{array}
      \right. \\[0.2cm]
\label{Correction-I} \;\; \xi^{k+1} \,=\; \xi^k-\alpha \mathcal{M}(\xi^k-\tilde{\xi}^k)\; \;\hbox{with} \;\; \alpha \in (0,2).
\end{numcases}
\end{subequations}

To establish the convergence analysis of the extended scheme \eqref{Rank2M-I} for \eqref{A-Problem-CI},  we only need to reset
$\Omega:=\mathcal{X}_1\times\mathcal{X}_2\times\cdots\times\mathcal{X}_p\times\Re_+^m$, and then modify \eqref{Full-pl} in Lemma \ref{VI-Lemma} as
$$
  \tilde{\lambda}^k \in \Re_+^m, \quad   (\lambda-\tilde{\lambda}^k)^T\Big[(\sum_{i=1}^p A_i \tilde{x}_i^{k}-b) -\sum_{i=1}^p (A_i\tilde{x}_i^{k}-A_ix_i^k)+\frac{1}{\beta}(\tilde{\lambda}^{k} - \lambda^k)\Big]\geq0, \quad \forall \; \lambda\in \Re_+^m,
$$
which is essentially the induced VI of the corresponding $\lambda$-subproblem \eqref{inequlemad}. The remaining proof can be seamlessly repeated  by the analysis in Sections \ref{sec4} and \ref{Sec-rate}.

\section{Conclusions}\label{sec6}
\setcounter{equation}{0}

In this paper, we present a rank-two relaxed parallel splitting version of the augmented Lagrangian method (ALM) for the multiple-block separable convex programming problem with linear equality constraints. The new algorithm adjusts the direct parallel splitting version of the ALM by both proximal regularization and relaxation techniques. Despite that the subproblems of the new algorithm are of the same difficulty as those of the existing algorithms of the same kind, the new algorithm requires no additional condition (in particular, no condition depending on the number of separable blocks in the model) while it maintains a step size in $(0,2)$ for further relaxing the primal and dual variables to ensure its convergence. We numerically validate the efficiency of the new algorithm with some application problems, and verify its significant acceleration over other existing algorithms of the same kind when the number of blocks in the separable model is large.

\end{CJK*}
\end{document}